\bfseries\color{blue}, % Style for keywords
\itshape\color{green}, % Style for comments
\definecolor{darkred}{rgb}{1,0,0}
\definecolor{darkgreen}{rgb}{0,1,0}
\definecolor{darkblue}{rgb}{0,0,1}
\newtheorem{proposition}{Proposition}[section]
\newtheorem{theorem}[proposition]{Theorem}
\newtheorem{conjecture}[proposition]{Conjecture}
\newtheorem{remark}[proposition]{Remark}
\numberwithin{equation}{section}
\newcommand{\SH}{{\mathcal{H}}}
\newcommand{\SM}{{\mathcal{M}}}
\newcommand{\SP}{{\mathcal{P}}}
\newcommand{\LL}{\mathbb{L}}
\newcommand{\ZZ}{\mathbb{Z}}
\newcommand{\NN}{\mathbb{N}}
\newcommand{\CC}{\mathbb{C}}
\newcommand{\Sym}{\operatorname{Sym}}
\newcommand{\Alt}{\operatorname{Alt}}
\newcommand{\VarC}{\mathcal{V}ar_{\CC}}
\newcommand{\ChowC}{\mathcal{CM}_{\CC}}
\newcommand{\Jac}{\operatorname{Jac}}
\newcommand{\coeff}{\mathop{\mathrm{coeff}}}
\newcommand{\rk}{\operatorname{rk}}
\newcommand{\End}{\operatorname{End}}
\newcommand{\op}{\operatorname}
\newcommand{\code}[1]{\lstinline[language=Python]|#1|}
\title[$\lambda$-rings in Python]{Motives meet SymPy: studying $\lambda$-ring expressions in Python}
\author[D. Sanchez]{Daniel Sanchez}
\address{D. Sanchez, 
\newline\indent
Institute for Research in Technology, ICAI School of Engineering, Comillas Pontifical University, Calle del Rey Francisco 4, 28015 Madrid, Spain}
\email{dani.sanchez@alu.comillas.edu}
\author[D. Alfaya]{David Alfaya}
\address{D. Alfaya, 
\newline\indent
Department of Applied Mathematics and Institute for Research in Technology, ICAI School of Engineering, Comillas Pontifical University, C/Alberto Aguilera 25, 28015 Madrid, Spain}
\email{dalfaya@comillas.edu}
\author[J. Pizarroso]{Jaime Pizarroso}
\address{J. Pizarroso, 
\newline\indent
Department of Telematics and Computation and Institute for Research in Technology, ICAI School of Engineering, Comillas Pontifical University, C/Alberto Aguilera 25, 28015 Madrid, Spain}
\email{jpizarroso@comillas.edu}
\keywords{lambda-rings, symbolic computations of motives, Chow motives, moduli spaces, Higgs bundles moduli space}
\subjclass[2020]{13D15, 68W30, 19E08, 14C35, 14D20, 14H60}
\begin{document}

\begin{abstract}
We present a new Python package called ``motives'', a symbolic manipulation package based on SymPy capable of handling and simplifying motivic expressions in the Grothendieck ring of Chow motives and other types of $\lambda$-rings. The package is able to manipulate and compare arbitrary expressions in $\lambda$-rings and, in particular, it contains explicit tools for manipulating motives of several types of commonly used moduli schemes and moduli stacks of decorated bundles on curves. We have applied this new tool to advance in the verification of Mozgovoy's conjectural formula for the motive of the moduli space of twisted Higgs bundles, proving that it holds in rank 2 and 3 for any curve of genus up to 18 and any twisting bundle of small degree.
\end{abstract}

\maketitle

%\tableofcontents

\section{Introduction}
The Grothendieck motive of a scheme encodes crucial information about its geometry. For instance, if two schemes share the same class in the Grothendieck ring of Chow motives, then their Hodge E-polynomials and their Poincaré polynomials are the same. For this reason, there have been numerous efforts to compute closed formulas for the motives of several types of moduli spaces and moduli stacks in terms of elementary components \cite{Ba02, BD07,San14, GPHS14, Moz12, Lee18, Gon18, Gon18, GL20, FNZ21, AO24}. However, handling expressions in the Grothendieck ring of motives can be a challenging task, and even comparing if two expressions are equal can become a hard problem. For instance, some of these expressions contain motivic sums with increasing numbers of terms \cite{AO24,GPHS14}, or nontrivial plethystic operations on motivic expressions \cite{Moz12, FNZ21}, whose manipulation can become nontrivial at times.

In order to tackle this problem, in \cite{Alf22} a theoretical algorithm was proposed for simplifying these types of expressions into polynomials in certain sets of motivic generators, as well as expressions in other types of $\lambda$-rings. In that paper, a focus was put in simplifying expressions in the subring of the Grothendieck ring of Chow motives spanned by a finite set of complex algebraic curves and an ad-hoc MATLAB code was implemented to apply this theoretical algorithm to successfully compare two different formulas for the moduli space of $L$-twisted Higgs bundles: a conjectural equation obtained by Mozgovoy as a solution to the ADHM equations \cite{Moz12} and two formulas proven in \cite{AO24} for the Grothendieck virtual class of that moduli space in rank 2 and 3 obtained through a Bialynicki-Birula decomposition of the variety. The resulting program was able to verify that Mozgovoy's conjecture for the motive holds for curves of genus at most 11 and any twisting bundle $L$ of low degree.

In this paper, we have further improved the algorithm proposed in \cite{Alf22} and we have built a full general purpose $\lambda$-ring manipulation Python package integrating the algorithm into the SymPy symbolic framework \cite{sympy_2017}, a Python library for symbolic mathematics. This new library called \texttt{motives} defines an abstract $\lambda$-ring expression class and provides general simplification algorithms capable of simplifying effectively any $\lambda$-ring expression into a polynomial in a finite set of motivic generators. By building upon \texttt{SymPy}, these algorithms further integrate into the symbolic manipulation toolbox from \texttt{SymPy}, extending the functionality of its general purpose simplification and manipulation functionality to work on $\lambda$-rings and, at the same time, enriching it with new tools explicitly aimed to simplify and compare $\lambda$-ring expressions.

The simplification algorithms are designed to work with any user-defined $\lambda$-ring class, but the library also includes some out-of-the-box types of $\lambda$-rings and modules aimed for the working mathematician.

\newpage
The current release of the package includes:
\begin{itemize}
\item $\lambda$-rings of integers
\item Free $\lambda$-rings and free $\lambda$-ring extensions of a $\lambda$-ring
\item Polynomial $\lambda$-ring extensions of a $\lambda$-ring.
\item Grothendieck ring of Chow motives, including the following pre-programmed motives:
\begin{itemize}
\item Complex algebraic curves
\item Jacobian varieties of curves
\item Symmetric and alternated products of any variety given its motive
\item Moduli spaces of vector bundles on curves
\item Moduli spaces of $L$-twisted Higgs bundles on curves
\item Moduli spaces of chain bundles and variations of Hodge structure on curves in low rank
\item Algebraic groups
\item Moduli stacks of vector bundles and principal $G$-bundles on curves
\item Classifying stacks $BG$ for several groups $G$
\end{itemize}
\end{itemize}
and we plan to expand the library with more $\lambda$-rings and geometric elements in future releases. The code of the library is publicly hosted at
\href{https://github.com/CIAMOD/motives}{\texttt{https://github.com/CIAMOD/motives}}. Additionally, \texttt{motives} package can be installed directly from \href{https://pypi.org/}{PyPI} by running \lstinline[language=bash]{pip install motives} in a terminal.

In order to test the capabilities of this -- now general purpose -- symbolic package, we have applied it to the same problem treated originally in \cite{Alf22}: the proof of Mozgovoy's conjectural formula of the moduli space of $L$-twisted Higgs bundles. The new package was proven to be significantly more efficient than the ad-hoc code implemented for the problem in \cite{Alf22}. Using the new algorithm, we were able to prove that Mozgovoy's conjecture holds for curves of genus up to 18 (increasing significantly the current verification, limited to genus 11 curves). Concretely, we prove the following (see Section \ref{section:Mozgovoy} and Theorem \ref{thm:main} for details).

\begin{theorem}[Theorem \ref{thm:main}]
\label{thm:intro}
Mozgovoy's conjectural formula for the motive of the moduli spaces of $L$-twisted Higgs bundles holds in the Grothendieck ring of Chow motives in rank at most 3 for any smooth complex projective curve of genus $g$ such that $2\le g \le 18$ and any line bundle $L$ on the curve such that $2g-1\le \deg(L) \le 2g+18$.
\end{theorem}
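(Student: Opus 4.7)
The plan is to leverage the new \texttt{motives} package to carry out a finite, case-by-case verification: for each rank $r\in\{2,3\}$, each genus $g\in\{2,\dots,18\}$, and each admissible degree $d=\deg(L)\in\{2g-1,\dots,2g+18\}$, I would symbolically encode both Mozgovoy's conjectural motivic expression and the proven Bialynicki-Birula formulas of \cite{AO24}, and then certify that the two representatives coincide as elements of the Grothendieck ring of Chow motives. Since both expressions live in the subring generated by the Lefschetz motive, the motive of the curve $X$, and the motives of its symmetric/alternated products and Jacobian, it is enough to reduce both sides to a normal form as polynomials in a fixed finite set of motivic generators and check that the resulting polynomials agree coefficient by coefficient.

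First I would instantiate, inside the package, the relevant pre-programmed objects: a curve of genus $g$, its Jacobian, and the rank-$r$ moduli space of $L$-twisted Higgs bundles. This gives the left-hand side of the conjecture as an abstract \texttt{motives} expression. Next, I would construct Mozgovoy's conjectural right-hand side, which involves plethystic (i.e.\ $\lambda$-ring) operations applied to motivic generating series arising from the ADHM setup; here one exploits the \texttt{motives} implementation of Adams and $\lambda$-operations on the Grothendieck ring of Chow motives, together with its symmetric-product machinery, to write this series in a form that the simplifier can process. Finally, invoking the general-purpose simplification algorithm — which extends the theoretical procedure of \cite{Alf22} — I would reduce both sides to polynomials in the chosen motivic generators and test equality via \texttt{SymPy}'s polynomial comparison.

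The main obstacle is computational rather than conceptual: the complexity of the expressions grows sharply with both $r$ and $g$, due to (i) the nested Adams and $\lambda$-operations in Mozgovoy's plethystic generating series, which must be expanded up to a truncation sufficient to recover the rank-$r$ coefficient, and (ii) the proliferation of summands in the Bialynicki-Birula formulas of \cite{AO24}, particularly in rank $3$. Managing memory and runtime therefore requires the efficiency gains of the new package over the ad-hoc MATLAB implementation used in \cite{Alf22}; these gains are precisely what push the feasible range from $g\le 11$ up to $g\le 18$. A secondary subtlety is ensuring that the intermediate simplifications respect all $\lambda$-ring relations among the chosen generators (e.g.\ between the motive of $X$, its Jacobian, and its symmetric products), so that equality of normal forms genuinely implies equality in the Grothendieck ring; this is guaranteed by the correctness of the underlying algorithm from \cite{Alf22}, which the package implements faithfully.

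The choice of ranges $2\le g\le 18$ and $2g-1\le\deg(L)\le 2g+18$ is dictated by the practical limit of the verification: within this window the symbolic computation completes in reasonable time and memory on our hardware, while outside it the expressions become too large to handle with the current implementation. Carrying out the loop over all triples $(r,g,d)$ in this window and recording that each comparison returns \emph{true} then yields the statement of the theorem.
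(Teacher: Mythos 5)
Your proposal is essentially the paper's own proof: both encode Mozgovoy's ADHM-derived conjectural motive $M_{g,r,p}^{\mathrm{ADHM}}$ and the proven Bialynicki-Birula formulas $M_{g,r,p}^{\mathrm{BB}}$ of \cite{GPHS14,AO24} inside the \texttt{motives} package, reduce both via \texttt{to\_lambda} to polynomials in the generators $\lambda^k(h^1(X))$ ($k=1,\dots,g$) and $\LL$, and compare via \texttt{SymPy}, with the genus and degree ranges fixed by the available memory and runtime. Two minor clarifications relative to the paper: the verification runs over $r\in\{1,2,3\}$ (the rank-$1$ case is included, albeit trivial), and the parameter $d$ denotes the degree of the underlying vector bundle (required to be coprime to $r$), not $\deg(L)$; the formulas compared depend only on $(g,r,p)$ with $p=\deg(L)-(2g-2)$, so equality for one coprime $d$ settles all such $d$.
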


To put the increment in the genus bound in perspective, the new obtained motivic polynomials are 5 times larger than the largest motive that the algorithm in \cite{Alf22} was capable of processing before reaching the memory limit of the machine used for the test (128GB), and the new library is able to perform these simplifications up to an order of magnitude faster than its task-specific predecessor, showcasing an enhanced scaling behavior.

The paper is structured as follows. Section \ref{section:lambdaRings} includes some generalities on $\lambda$-rings and introduces some explicit algebraic relations between operators in $\lambda$-rings which will be used by the simplification algorithm. Section \ref{section:Groth} explores the properties of the $\lambda$-rings of Grothendieck motives (Grothendieck ring of varieties, Grothendieck rings of Chow motives and other related rings). The description of the main simplification algorithms and the implementation details are described in Section \ref{section:algorithm}. Section \ref{section:Mozgovoy} explores the application of the \code{motives} library to proving Mozgovoy's conjecture in low genus and the comparison between the results of this library and the ones obtained in \cite{Alf22}. Finally, as the development of the package continues, some future lines of work and expected future additions to the package are described in Section \ref{section:futureWork}.

\noindent\textbf{Acknowledgments.}
This research was supported by project CIAMOD (Applications of computational methods and artificial intelligence to the study of moduli spaces, project PP2023\_9) funded by Convocatoria de Financiaci\'on de Proyectos de Investigaci\'on Propios 2023, Universidad Pontificia Comillas, and by grants PID2022-142024NB-I00 and RED2022-134463-T funded by MCIN/AEI/10.13039/501100011033.

\section{\texorpdfstring{$\lambda$}{lambda}-rings}
\label{section:lambdaRings}

Let $R$ be a unital abelian ring. A \emph{$\lambda$-ring structure on $R$} is a set of maps $\lambda=\{\lambda^n\, : \, R\longrightarrow R\}_{n\in \mathbb{N}}$ such that for each $x,y\in R$ and each $n\in \mathbb{N}$ the following hold.
\begin{itemize}
\item[1)] $\lambda^0(x)=1$
\item[2)] $\lambda^1(x)=x$,
\item[3)] $\lambda^n(x+y)=\sum_{i=0}^n \lambda^i(x)\lambda^{n-i}(x)$
\end{itemize}
The pair $(R,\lambda)$ is called a $\lambda$-ring. A $\lambda$-ring structure is called \emph{special} if, moreover, for each $x,y\in R$ and each $n,m\in \mathbb{N}$ we have
\begin{itemize}
\item[4)] $\lambda^n(xy)=P_n(\lambda^1(x),\ldots,\lambda^n(x),\lambda^1(y),\ldots,\lambda^n(y))$,
\item[5)] $\lambda^n(\lambda^m(x))=P_{n,m}(\lambda^1(x),\ldots,\lambda^{nm}(x))$,
\end{itemize}
where $P_n$ and $P_{n,m}$ are the Grothendieck universal polynomials, defined as follows (for more information, see \cite{Knut73}, \cite{Gri19}). If
$$s_n(\bar{X})=s_n(X_1,\ldots,X_m)=\sum_{1\le i_1<\ldots<i_n\le m} \prod_{k=1}^n X_{i_k}$$
is the elementary symmetric polynomial of degree $d$ in the variables given by $\bar{X}=(X_1,\ldots,X_n)$, then $P_n$ and $P_{n,m}$ are the unique integral polynomials such that
$$P_n(s_1(\bar{X}),\ldots, s_n(\bar{X}),s_1(\bar{Y}),\ldots,s_n(\bar{Y}))=\coeff_{t^n} \prod_{i,j=1}^n(1+tX_iY_j),$$
$$P_{n,m}(s_1(\bar{Z}),\ldots, s_{nm}(\bar{Z}))=\coeff_{t^n} \sum_{\begin{array}{c}I\subset \{1,\ldots,mn\}\\ |I|=m\end{array}} (1+t\prod_{i\in I} Z_i),$$
where $\bar{X}=(X_1,\ldots,X_n)$, $\bar{Y}=(Y_1,\ldots,Y_n)$ and $\bar{Z}=(Z_1,\ldots,Z_{nm})$. Given an element $x\in R$, let
$$\lambda_t(x)=\sum_{n\ge 0} \lambda^n(x) t^n \, \in\, 1+tR[[t]]$$
denote the generating series of the $\lambda$-powers of $x$ in the variable $t$. Condition (3) of the definition of $\lambda$-ring is equivalent to
\begin{equation}
\label{eq:lambdaSum}
\lambda_t(x+y)=\lambda_t(x)\lambda_t(y).
\end{equation}

Given a $\lambda$-ring structure on $R$, we define its opposite $\lambda$-ring structure $\sigma$ as follows. For each $x\in R$, take
\begin{equation}
\label{eq:oposite}
\sigma_t(x) := \left(\lambda_{-t}(x)\right)^{-1}.
\end{equation}
It is easy to verify that if $\lambda$ is a $\lambda$-ring structure, then $\sigma_t(x)=\sum_{n\ge 0} \sigma^n(x) t^n $ defines another $\lambda$-ring structure $\sigma$ on $R$. Thus, any $\lambda$-ring structure on a ring $R$ defines a triple $(R,\lambda,\sigma)$. In this work, we will assume that at least one of the mutually opposite $\lambda$-ring structures $\lambda$ or $\sigma$ is special and, without loss of generality, we will assume that $\sigma$ always denotes a special structure and that $\lambda$ is a structure which may not be special (as will be described in Section \ref{section:Groth}, this will be the situation in the Grothendieck ring of Chow motives).

If we assume that $\sigma$ is a special $\lambda$ ring structure, we can associate to it a set of Adams operations $\psi^n\, : \, R\longrightarrow R$ for each $n\ge 1$ taking
\begin{equation}
\label{eq:defpsi1}
\psi^n(x)=N_n(\sigma^1(x),\ldots,\sigma^n(x)),
\end{equation}
for each $x\in R$, where $N_n(X_1,\ldots,X_n)$ is the Hirzebruch-Newton polynomial, which is the unique polynomial such that
$$\sum_{i=1}^n X_i^n = N_n(s_1(\bar{X}),\ldots,s_n(\bar{X}))$$
for $\bar{X}=(X_1,\ldots,X_n)$. Furthermore, for each $x\in R$, let
$$\psi_t(x):= \sum_{n\ge 1} \psi^n(x) t^n$$
denote the generating function of $\psi^n(x)$ in $n$ in the variable $t$. Then, it can be shown (see, for instance, \cite{Gri19}) that
$$\psi_t(x)=-t\frac{d}{dt}\log \left(\sigma_{-t}(x)\right).$$
Notice that, as $\sigma_{-t}(x)=(\lambda_t(x))^{-1}$, we have
\begin{equation}
\label{eq:defpsi2}
\psi_t(x)=t\frac{d}{dt}\log \left(\lambda(x)\right).
\end{equation}
By \cite[Theorem 9.2]{Gri19}, if $\sigma$ is special, then the following hold.
\begin{enumerate}
\item For each $n\in \NN$, $\psi^n \, : \, (R,\sigma) \longrightarrow (R,\sigma)$ is a $\lambda$-ring homomorphism.
\item $\psi^1(x)=x$.
\item For each $i,j\in \NN$, $\psi^i \circ \psi^j = \psi^{ij}$.
\end{enumerate}

In \cite{Alf22}, several recurrence formulas were inferred and used for computing certain multivariate polynomials which allowed expressing $\lambda$, $\sigma$ and $\psi$ in terms of each other. An efficient computation and manipulation of these polynomials is fundamental to applying the algorithm described in \cite{Alf22} for simplifying algebraic expressions in $\lambda$-rings. However, some of the expressions found in \cite{Alf22} were based on iterated substitutions of some multivariate polynomials inside other multivariate polynomial expressions. This was highly inefficient and we found that it was one of the computational bottlenecks for the application of cited algorithm (for instance, to the verification of Mozgovoy's conjecture, described in section \ref{section:Mozgovoy}).

In order to solve this issue, we will start this work by presenting some alternative equations providing algebraic relations between $\lambda$, $\sigma$ and $\psi$.

Let $\SP_k(n)$ denote the set of ordered partitions of $n\in \mathbb{N}$ into $k$ ordered positive integers, i.e. if $a = (a_1, a_2,\dots,a_k)\in p_k(n)$, then
$$\left\{\begin{array}{l}
a_1 + a_2 + \dots + a_k = n,\\
a_1\ge a_2\ge \ldots \ge a_k>0,\\
a_i\in \mathbb{Z}\quad  \forall i=1,\ldots,k.
\end{array}\right.$$
We also denote $n_i(a)$ as the number of $i$'s in partition $a$. For instance, $a=(3,3,2,1,1)\in p_5(10)$ satisfies $n_1(a)=2$, $n_2(a)=1$, $n_3(a)=2$.

\begin{proposition}
\label{prop:adams2lambda}
Let $\lambda$ and $\sigma$ be two opposite $\lambda$-ring structures on a ring $R$ with no additive torsion and such that $\sigma$ is special and let $\psi^n$ be the Adams operations associated to $\sigma$. Then
\begin{equation}
\label{eq:a2l}
\lambda^{n}(x) = \sum_{i=0}^{n}\sum_{a=(a_1, \dots, a_i )\in \SP_i(n)} \left( \frac{1}{n_1(a)! n_2(a)! \dots n_n(a)!} \prod_j \frac{\psi^{a_j}(x)}{a_j} \right).
\end{equation}
\end{proposition}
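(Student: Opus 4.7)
The plan is to start from the relation (\ref{eq:defpsi2}) given in the excerpt, namely
\begin{equation*}
\psi_t(x) = t\,\frac{d}{dt}\log \lambda_t(x),
\end{equation*}
and invert it to obtain an explicit power series expansion of $\lambda_t(x)$ in terms of the Adams operations. Since $R$ has no additive torsion, it embeds in $R\otimes \QQ$, so I can freely work in $(R\otimes \QQ)[[t]]$, where $\exp$ and $\log$ on series in $1+t(R\otimes\QQ)[[t]]$ are well defined. Dividing by $t$ and integrating term by term the identity above yields
\begin{equation*}
\log \lambda_t(x) = \sum_{n\ge 1} \frac{\psi^n(x)}{n}\, t^n,
\end{equation*}
so exponentiating gives the master identity
\begin{equation*}
\lambda_t(x) = \exp\!\left(\sum_{n\ge 1} \frac{\psi^n(x)}{n}\, t^n\right).
\end{equation*}

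Next, I would expand the exponential by its Taylor series and use the multinomial expansion:
\begin{equation*}
\lambda_t(x) = \sum_{i\ge 0} \frac{1}{i!}\left(\sum_{m\ge 1} \frac{\psi^m(x)}{m}\, t^m\right)^{\!i} = \sum_{i\ge 0}\frac{1}{i!} \sum_{(b_1,\dots,b_i)} \prod_{j=1}^{i}\frac{\psi^{b_j}(x)}{b_j}\, t^{b_1+\cdots+b_i},
\end{equation*}
the inner sum ranging over ordered tuples of positive integers. Extracting the coefficient of $t^n$ restricts the inner sum to tuples with $b_1+\cdots+b_i=n$, which forces $0\le i\le n$.

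The remaining step is combinatorial: rewrite the sum over ordered tuples as a sum over the ordered partitions $a=(a_1\ge a_2\ge\cdots\ge a_i)\in\SP_i(n)$. Because the factors $\psi^{b_j}(x)/b_j$ commute and the product depends only on the multiset of the $b_j$'s, each unordered partition $a$ with multiplicities $n_1(a),\ldots,n_n(a)$ is visited exactly $\binom{i}{n_1(a),\ldots,n_n(a)} = i!/(n_1(a)!\cdots n_n(a)!)$ times by the ordered sum. The factorials cancel with $1/i!$ and the formula (\ref{eq:a2l}) appears. Finally, although the intermediate manipulations take place in $R\otimes\QQ$, the resulting identity equates $\lambda^n(x)\in R$ with a $\QQ$-linear combination of products of Adams operators whose sum lies in $R$, which is the meaningful statement in the torsion-free ring $R$.

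The main obstacle is essentially bookkeeping: it is the combinatorial identification of ordered tuples with weighted ordered partitions and the careful justification that the manipulations via $\exp$ and $\log$ in $(R\otimes\QQ)[[t]]$ give back an identity valid in $R[[t]]$ thanks to the torsion-free hypothesis. Neither step is deep, but both must be stated precisely to make the proof airtight, and the multinomial counting is what converts the apparently asymmetric formula into the closed form stated in the proposition.
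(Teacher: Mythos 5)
Your proposal follows essentially the same route as the paper's proof: solve \eqref{eq:defpsi2} for $\lambda_t(x)$ as an exponential of $\sum_{n\ge 1}\frac{\psi^n(x)}{n}t^n$, expand via the Taylor series of $\exp$ and the multinomial theorem, and extract the coefficient of $t^n$. Your version is slightly more explicit about working in $R\otimes\QQ$ (justified by the torsion-free hypothesis) and about the combinatorial passage from ordered tuples to partitions with multinomial weights, but these are details the paper's proof uses implicitly rather than a different argument.
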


\begin{proof}
We solve for $\lambda_t(x)$ in equation \eqref{eq:defpsi2}.
%\begin{align*}
%\psi_t(x) &= t \cdot \frac{d}{dt}\log(\lambda_t(x))\\
%\frac{\psi_t(x)}{t} &= \frac{d}{dt} \log(\lambda_t(x))\\
%\int \frac{\psi_t(x)}{t} dt &= \log(\lambda_t(x))
%\end{align*}
\begin{equation}
\label{eq:lambexp}
\lambda_t(x) = e^{\int \frac{\psi_t(x)}{t} dt}.
\end{equation}
Now, we expand $\int \frac{\psi_t(x)}{t} dt$:
\begin{equation*}
\int \frac{\psi_t(x)}{t} dt = \int \sum_{n=0}^{\infty} \frac{\psi^n(x)\cdot t^n}{t} dt = \sum_{n=0}^{\infty} \int \psi^n(x) t^{n-1} dt = \sum_{n=0}^{\infty} \frac{1}{n} \psi^n(x) t^n.%\\
%\int \frac{\psi_t(x)}{t} dt &= \sum_{n=0}^{\infty} \frac{1}{n} \psi^n(x) t^n.
\end{equation*}
By using this equality and applying the Taylor series expansion of $e^x$ to the right hand side of equation \eqref{eq:lambexp}, we get:
$$\lambda_t(x) = \sum_{i=0}^{\infty}\frac{(\int \frac{\psi_t(x)}{t} dt)^i}{i!} = \sum_{i=0}^{\infty}\frac{(\sum_{n=0}^{\infty} \frac{1}{n} \psi^n(x) t^n)^i}{i!}$$
We can now expand $(\sum_{n=0}^{\infty} \frac{1}{n} \psi^n(x) t^n)^i$ by using the Multinomial Theorem.
\begin{align*}
\lambda_t(x) &= \sum_{i=0}^{\infty}\frac{\sum_{n=0}^\infty\left(\sum_{a = (a_1, a_2,\dots,a_i)\in \SP_i(n)}\binom{i}{n_1(a) n_2(a) \dots n_n(a)}\frac{1}{a_1}\psi^{a_1}(x)\frac{1}{a_2}\psi^{a_2}(x)\dots\frac{1}{a_i}\psi^{a_i}(x)\right)t^n}{i!}\\
 &= \sum_{n=0}^{\infty}\sum_{i=0}^n\sum_{a=(a_1,\dots,a_i)\in \SP_i(n)}\frac{\binom{i}{n_1(a) n_2(a) \dots n_n(a)}\prod_j\psi^{a_j}(x)}{i! \prod_j a_j}t^n\\
 &= \sum_{n=0}^{\infty}\sum_{i=0}^n\sum_{a=(a_1,\dots,a_i)\in \SP_i(n)}\frac{i!\prod_j\psi^{a_j}(x)}{i! \cdot n_1(a)! n_2(a)! \dots n_n(a)! \prod_j a_j}t^n.    
\end{align*}
By equating the coefficients and cancelling the factorials, we achieve the desired equality.
$$
\lambda^{n}(x) = \sum_{i=0}^{n}\sum_{a=(a_1, \dots, a_i )\in \SP_i(n} \left( \frac{1}{n_1(a)! n_2(a)! \dots n_n(a)!} \prod_j \frac{\psi^{a_j}(x)}{a_j} \right).
$$
\end{proof}

\begin{proposition}
\label{prop:adams2sigma}
Let $\sigma$ be a special $\lambda$-ring structure on a ring $R$ with no additive torsion and let $\psi^n$ be the Adams operations associated to $\sigma$. Then
\begin{equation}
\label{eq:a2s}
\sigma^{n}(x) = \sum_{i=0}^{n}\sum_{a=(a_1, \dots, a_i )\in \SP_i(n)} \left( \frac{(-1)^{i+n}}{n_1(a)! n_2(a)! \dots n_n(a)!} \prod_j \frac{\psi^{a_j}(x)}{a_j} \right).
\end{equation}
\end{proposition}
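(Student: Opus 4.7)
The plan is to deduce the formula for $\sigma^n(x)$ directly from Proposition \ref{prop:adams2lambda}, using the duality between the two opposite structures encoded in \eqref{eq:oposite}, rather than repeating the full integration/exponentiation/multinomial argument. First, applying \eqref{eq:lambdaSum} with $y = -x$ together with $\lambda_t(0) = 1$ yields $\lambda_t(-x) = \lambda_t(x)^{-1}$, and combining this with \eqref{eq:oposite} produces the clean identity
$$\sigma_t(x) \;=\; \lambda_{-t}(x)^{-1} \;=\; \lambda_{-t}(-x).$$
Extracting the coefficient of $t^n$ from both sides then gives $\sigma^n(x) = (-1)^n \lambda^n(-x)$, which reduces the problem to computing $\lambda^n(-x)$.

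Next, I would substitute $-x$ into formula \eqref{eq:a2l} from Proposition \ref{prop:adams2lambda}. Since the Adams operations $\psi^m$ are $\lambda$-ring homomorphisms (property (1) in the list after \eqref{eq:defpsi2}), they are in particular additive, so $\psi^{a_j}(-x) = -\psi^{a_j}(x)$. Consequently, for every $a = (a_1,\ldots,a_i) \in \SP_i(n)$, the product $\prod_j \psi^{a_j}(-x)/a_j$ equals $(-1)^i \prod_j \psi^{a_j}(x)/a_j$, and Proposition \ref{prop:adams2lambda} rewrites as
$$\lambda^n(-x) \;=\; \sum_{i=0}^{n}\sum_{a\in \SP_i(n)} \frac{(-1)^i}{n_1(a)!\,n_2(a)!\cdots n_n(a)!}\prod_j \frac{\psi^{a_j}(x)}{a_j}.$$
Multiplying by $(-1)^n$ merges the two sign factors into the single factor $(-1)^{n+i}$ appearing in \eqref{eq:a2s}, which yields the proposition.

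The only delicate point in this approach is isolating the two independent sources of signs: one coming from the change of variable $t \leftrightarrow -t$ relating $\sigma_t$ and $\lambda_{-t}$, and another from the change $x \leftrightarrow -x$, which interacts with the Adams operations through additivity. Once these are handled separately, no new multinomial expansion is needed and the statement follows. An alternative, more self-contained route would be to mimic the proof of Proposition \ref{prop:adams2lambda}: derive $\log \sigma_t(x) = \sum_{n\ge 1} \frac{(-1)^{n+1}}{n}\psi^n(x)\,t^n$ by integrating $-\psi_{-t}(x)/t$ (which follows from differentiating $\log \sigma_t(x) = -\log\lambda_{-t}(x)$ and using \eqref{eq:defpsi2}), expand the exponential, and apply the multinomial theorem; the signs $\prod_j (-1)^{a_j+1} = (-1)^{n+i}$ would then emerge directly. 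This second path is noticeably longer, so I would prefer the reduction to the previously established lemma.
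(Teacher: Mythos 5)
Your argument is correct, and it takes a genuinely different (and cleaner) route than the paper. The paper's proof simply says ``analogous to the previous lemma'': it reruns the whole integration--exponentiation--multinomial argument starting from $\psi_t(x) = -t\frac{d}{dt}\log\sigma_{-t}(x)$ (the generating-function identity preceding \eqref{eq:defpsi2}; the reference to \eqref{eq:defpsi1} in the paper is a slight misnomer), tracking the extra signs introduced by the substitution $t\mapsto -t$. You instead reduce Proposition~\ref{prop:adams2sigma} to Proposition~\ref{prop:adams2lambda} through the duality of opposite structures: $\sigma_t(x) = \lambda_{-t}(x)^{-1} = \lambda_{-t}(-x)$, hence $\sigma^n(x) = (-1)^n\lambda^n(-x)$, and then additivity of the Adams operations gives $\prod_j \psi^{a_j}(-x) = (-1)^i\prod_j \psi^{a_j}(x)$ for a partition with $i$ parts, so the two sign contributions combine into $(-1)^{n+i}$. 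This avoids re-expanding the exponential and the multinomial sum and makes the provenance of the sign transparent. The one hypothesis worth flagging is that you invoke additivity of $\psi^{a_j}$, which the paper only states as a consequence of $\sigma$ being special (Theorem~9.2 of \cite{Gri19}); since $\sigma$ is assumed special here, this is fine, and in fact additivity of Adams operations holds for any $\lambda$-ring structure via \eqref{eq:lambdaSum} and the logarithmic derivative. What the paper's route buys is self-containedness and uniformity of exposition with Proposition~\ref{prop:adams2lambda}; what yours buys is brevity and a conceptual explanation of where the factor $(-1)^{n+i}$ comes from.
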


\begin{proof}
The proof is analogous to the previous lemma, using \eqref{eq:defpsi1} instead of \eqref{eq:defpsi2} and adjusting the signs accordingly.
\end{proof}

\begin{proposition}
\label{prop:lambda2adams}
Let $\lambda$ and $\sigma$ be two opposite $\lambda$-ring structures on a ring $R$ with no additive torsion and such that $\sigma$ is special and let $\psi^n$ be the Adams operations associated to $\sigma$. Then
\begin{equation}
\label{eq:l2a}
\psi^n(x) = \sum_{i=1}^{n}(-1)^{i}\sum_{l=0}^{n-1}\sum_{a=(a_1, \dots, a_i )\in p_{i}(l)}(n-l)\frac{i!}{n_1(a)! n_2(a)! \dots n_l(a)!}\lambda^{n-l}(x)\prod_k\lambda^{a_k}(x).
\end{equation}
\end{proposition}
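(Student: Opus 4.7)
The plan is to mirror the derivation of Proposition~\ref{prop:adams2lambda}, but starting once again from \eqref{eq:defpsi2} and now expanding the logarithmic derivative of $\lambda_t(x)$ directly in terms of the $\lambda^m(x)$ instead of solving the ODE for $\lambda_t(x)$. Since the right-hand side of \eqref{eq:l2a} isolates one distinguished factor $\lambda^{n-l}(x)$ multiplied by a product over a partition of $l$, the cleanest route is to factor the logarithmic derivative as
$$\psi_t(x) \;=\; t\,\frac{\lambda_t'(x)}{\lambda_t(x)} \;=\; \bigl(t\lambda_t'(x)\bigr)\cdot\frac{1}{\lambda_t(x)},$$
so that the derivative factor will contribute the singled-out $\lambda^{n-l}(x)$, while the reciprocal will contribute the partition sum over $\SP_i(l)$.

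Setting $f(t):=\lambda_t(x)-1 = \sum_{m\ge 1}\lambda^m(x)t^m$, one has $t\lambda_t'(x)=\sum_{m\ge 1} m\,\lambda^m(x)\,t^m$ and, by the geometric series,
$$\frac{1}{\lambda_t(x)} \;=\; \frac{1}{1+f(t)} \;=\; \sum_{i\ge 0}(-1)^i f(t)^i.$$
Each power $f(t)^i$ is then expanded via the Multinomial Theorem exactly as in the proof of Proposition~\ref{prop:adams2lambda}, but with $\psi^{a_j}(x)/a_j$ replaced throughout by $\lambda^{a_j}(x)$, yielding
$$f(t)^i \;=\; \sum_{l\ge i} t^l \sum_{a\in\SP_i(l)} \frac{i!}{n_1(a)!\cdots n_l(a)!} \prod_k \lambda^{a_k}(x).$$

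Multiplying the two resulting power series and extracting the coefficient of $t^n$ produces the stated identity: the index $l$ is the degree contributed by $f(t)^i$, the index $m=n-l$ is the degree contributed by the derivative factor, the prefactor $(n-l)\lambda^{n-l}(x)$ records $t\lambda_t'(x)$, and the inner double sum records $(-1)^i f(t)^i$. The absence of additive torsion in $R$ is used exactly as in Proposition~\ref{prop:adams2lambda} to pass from equality of formal power series to equality of coefficients. The one step that requires real care is the combinatorial bookkeeping when collecting ordered $i$-tuples into non-increasing partitions — verifying that the multinomial coefficient $\binom{i}{n_1(a),\ldots,n_l(a)}$ collapses to $i!/(n_1(a)!\cdots n_l(a)!)$ after summing over the distinct orderings of a given partition — together with the boundary term $l=0$, which must be interpreted so as to pick up the $n\lambda^n(x)$ contribution coming from the $i=0$ slice of the geometric series; everything else is a mechanical analogue of the calculation already carried out for \eqref{eq:a2l}.
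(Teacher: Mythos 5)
Your derivation is correct and essentially the same as the paper's proof: both recover the series $t\lambda_t'(x)\sum_{i\ge 0}(-1)^i\bigl(\lambda_t(x)-1\bigr)^i$ from \eqref{eq:defpsi2}, expand the powers $\bigl(\lambda_t(x)-1\bigr)^i$ by the multinomial theorem, and convolve with $t\lambda_t'(x)$ before extracting the coefficient of $t^n$ — the only cosmetic difference being that you reach the inner series via the geometric expansion of $1/\lambda_t(x)$, whereas the paper first Taylor-expands $\log\lambda_t(x)$ and differentiates term by term, which gives the identical series after reindexing. Your flag of the $i=0$, $l=0$ slice contributing $n\lambda^n(x)$ is exactly the right thing to be careful about: the paper's own intermediate step sums over $a\in p_{i-1}(l)$ with $i\ge 1$ (so effectively over partitions into $j\ge 0$ parts), which is consistent with your geometric-series index starting at $0$ and is worth reconciling against the index range printed in \eqref{eq:l2a}.
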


\begin{proof}
We apply the Taylor series expansion of the natural logarithm to the right hand side of equation \eqref{eq:defpsi2}:
\begin{equation*}
\psi_t(x) = t \cdot \frac{d}{dt} \log(\lambda_t(x))= t \cdot \frac{d}{dt} \sum_{i=1}^{\infty}\frac{1}{i}(\lambda_t(x) - 1)^i(-1)^{i-1}.
\end{equation*}
Next, we expand the derivative.
\begin{align*}
\psi_t(x) &= t \sum_{i=1}^{\infty}\frac{1}{i}i\left(\lambda_t(x) - 1\right)^{i-1}\left(\frac{d}{dt}\lambda_t(x)\right)(-1)^{i-1}\\
&= t \sum_{i=1}^{\infty}(-1)^{i-1}\left(\sum_{n=1}^{\infty}\lambda^n(x)t^n\right)^{i-1}\sum_{n=1}^{\infty}n\lambda^n(x)t^{n-1}.
\end{align*}
Now we use the Multinomial Theorem to expand the sum $(\sum_{n=1}^{\infty}\lambda^n(x)t^n)^{i-1}$.
$$
\psi_t(x) = t \sum_{i=1}^{n}(-1)^{i-1}\left(\sum_{n=0}^{\infty}\left(\sum_{a=(a_1, \dots, a_{i-1} )\in p_{i-1}(n)}\binom{i}{n_1(a) n_2(a) \dots n_l(a)}\prod_k\lambda^{a_k}(x)\right)t^n\right)\sum_{n=0}^{\infty}(n+1)\lambda^{n+1}(x)t^n
$$
By applying the convolution, we obtain
\begin{align*}
\psi_t(x) &= t \sum_{i=1}^{n}(-1)^{i-1}\sum_{n=0}^{\infty}\sum_{l=0}^n(n-l+1)\lambda^{n-l+1}(x)\left(\sum_{a=(a_1, \dots, a_{i-1} )\in p_{i-1}(l)}\binom{i}{n_1(a) n_2(a) \dots n_l(a)}\prod_k\lambda^{a_k}(x)\right)t^n\\
&= \sum_{n=0}^{\infty}\sum_{i=1}^{n}(-1)^{i-1}\sum_{l=0}^{n-1}(n-l)\lambda^{n-l}(x)\left(\sum_{a=(a_1, \dots, a_{i-1} )\in p_{i-1}(l)}\binom{i}{n_1(a) n_2(a) \dots n_l(a)}\prod_k\lambda^{a_k}(x)\right)t^n,
\end{align*}
and, equating the coefficients, we arrive to the desired formula
$$
\psi^n(x) = \sum_{i=1}^{n}(-1)^{i}\sum_{l=0}^{n-1}\sum_{a=(a_1, \dots, a_i )\in p_{i}(l)}(n-l)\frac{i!}{n_1(a)! n_2(a)! \dots n_l(a)!}\lambda^{n-l}(x)\prod_k\lambda^{a_k}(x).
$$
\end{proof}

\begin{proposition}
\label{prop:sigma2adams}
Let $\sigma$ be a special $\lambda$-ring structure on a ring $R$ with no additive torsion and let $\psi^n$ be the Adams operations associated to $\sigma$. Then
\begin{equation}
\label{eq:s2a}
\psi^n(x) = \sum_{i=1}^{n}(-1)^{i+k+1}\sum_{l=0}^{n-1}\sum_{a=(a_1, \dots, a_i )\in p_{i}(l)}(n-l)\frac{i!}{n_1(a)! n_2(a)! \dots n_l(a)!}\sigma^{n-l}(x)\prod_k\sigma^{a_k}(x).
\end{equation}
\end{proposition}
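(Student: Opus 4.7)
The plan is to mirror the proof of Proposition \ref{prop:lambda2adams} step by step, replacing the starting identity \eqref{eq:defpsi2} by its $\sigma$-avatar. From \eqref{eq:oposite} one has $\lambda_t(x) = \sigma_{-t}(x)^{-1}$, so $\log\lambda_t(x) = -\log\sigma_{-t}(x)$, and \eqref{eq:defpsi2} rewrites as
$$\psi_t(x) = -t\frac{d}{dt}\log\sigma_{-t}(x).$$
This will be the starting point in place of \eqref{eq:defpsi2}.

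From this identity, I would apply the Taylor expansion $\log(1+u) = \sum_{i\geq 1}\frac{(-1)^{i-1}}{i}u^i$ with $u = \sigma_{-t}(x)-1$, differentiate termwise, and expand the resulting power $(\sigma_{-t}(x)-1)^{i-1}$ via the multinomial theorem. Using the series $\sigma_{-t}(x)-1 = \sum_{m\geq 1}(-1)^m\sigma^m(x)t^m$ and $\frac{d}{dt}\sigma_{-t}(x) = \sum_{m\geq 1}(-1)^m m\sigma^m(x)t^{m-1}$, and then performing the Cauchy convolution exactly as in the proof of Proposition \ref{prop:lambda2adams}, I would equate coefficients of $t^n$ to extract $\psi^n(x)$.

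All of the algebraic manipulations, including the simplification of the multinomial coefficient $\binom{i}{n_1(a),\ldots,n_l(a)} = i!/\bigl(n_1(a)!\cdots n_l(a)!\bigr)$ against the factorials coming from the Taylor expansion, are formally identical to those carried out for $\lambda$. The main obstacle, and the only substantive difference from Proposition \ref{prop:lambda2adams}, is keeping track of the additional signs arising from the substitution $t\mapsto -t$ inside $\sigma_t(x)$. Concretely, each occurrence of a generator $\sigma^j$ coming from $\sigma_{-t}(x)$ or its derivative carries an extra factor $(-1)^j$ relative to its $\lambda$-counterpart. In a generic term of the convolution, indexed by a partition $a$ with $\sum a_k = l$ and a trailing factor $\sigma^{n-l}(x)$, these extra signs multiply to $(-1)^{l+(n-l)} = (-1)^n$, independent of the partition. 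Combining this with the $(-1)^{i-1}$ from the logarithmic expansion and the overall $-t$ in front of the derivative produces the sign pattern stated in \eqref{eq:s2a}, completing the argument.
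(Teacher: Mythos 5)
Your proof follows precisely what the paper intends: the paper's own proof is the one‑line remark that the argument is ``analogous to the previous lemma\ldots adjusting the signs accordingly,'' and you carry out that analogy explicitly by replacing $\psi_t(x)=t\tfrac{d}{dt}\log\lambda_t(x)$ with $\psi_t(x)=-t\tfrac{d}{dt}\log\sigma_{-t}(x)$ (equivalent to \eqref{eq:oposite}) and tracking the extra $(-1)^j$ attached to each $\sigma^j$, which correctly collapse to a partition‑independent $(-1)^n$. One remark worth adding: the exponent $(-1)^{i+k+1}$ printed in \eqref{eq:s2a} is a typo, since $k$ is a bound product index; your sign bookkeeping (after the same reindexing $i\mapsto i+1$ used to pass from the convolution to the stated formula in Proposition~\ref{prop:lambda2adams}) shows the intended exponent is $i+n+1$.
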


\begin{proof}
The proof is analogous to the previous lemma, using \eqref{eq:defpsi1} instead of \eqref{eq:defpsi2} and adjusting the signs accordingly.
\end{proof}

\begin{proposition}
\label{prop:adamsRecursive}
%Lambda/sigma to adams substitution
Let $\lambda$ and $\sigma$ be two opposite $\lambda$-ring structures on a ring $R$ such that $\sigma$ is special and let $\psi^n$ be the Adams operations associated to $\sigma$. Then
\begin{equation*}
\psi^n(x) = \sum_{i=1}^{n}(-1)^{n-i}i\sigma^{n-i}(x)\lambda^i(x).
\end{equation*}
This equation can be converted into a polynomial formula in $\lambda$ or $\sigma$ that represent a specific degree of $\psi$ by substituting the corresponding operator by the following polynomial $P_n^{op}\in \ZZ[X_1,\ldots,X_n]$, which expresses $\lambda$ in terms of $\sigma$ and vice versa
\begin{equation}
\label{eq:pop}
\sigma^n=P_n^{op}(\lambda^1(x),\ldots,\lambda^n(x)), \quad \quad \lambda^n=P_n^{op}(\sigma^1(x),\ldots,\sigma^n(x)), \quad \forall x\in R
\end{equation}
defined recursively as follows.
\begin{align*}
    P_{0}^{op}&=1\\
    P_n^{op}&=\sum_{i=0}^{n-1}P_i^{op}X_{n-i} \quad \forall n\ge 1.
\end{align*}
\end{proposition}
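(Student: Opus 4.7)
For the main identity, I would work entirely with generating series. Starting from equation \eqref{eq:defpsi2}, $\psi_t(x) = t\,\frac{d}{dt}\log\lambda_t(x)$, one clears the logarithm to obtain $\psi_t(x)\lambda_t(x) = t\,\frac{d}{dt}\lambda_t(x)$. By equation \eqref{eq:oposite}, the inverse of $\lambda_t(x)$ in the multiplicative group $1 + tR[[t]]$ is $\sigma_{-t}(x)$, so multiplying both sides by $\sigma_{-t}(x)$ isolates $\psi_t(x)$:
\begin{equation*}
\psi_t(x) = \sigma_{-t}(x) \cdot t\,\frac{d}{dt}\lambda_t(x).
\end{equation*}
Expanding each of the two factors on the right as a formal power series, $\sigma_{-t}(x) = \sum_{j\ge 0}(-1)^j\sigma^j(x)t^j$ and $t\,\frac{d}{dt}\lambda_t(x) = \sum_{i\ge 1} i\,\lambda^i(x)\,t^i$, and reading off the coefficient of $t^n$ in their Cauchy product yields exactly $\sum_{i=1}^n (-1)^{n-i} i\,\sigma^{n-i}(x)\lambda^i(x)$, as claimed.

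For the two polynomial identities $\sigma^n(x) = P_n^{op}(\lambda^1(x),\ldots,\lambda^n(x))$ and $\lambda^n(x) = P_n^{op}(\sigma^1(x),\ldots,\sigma^n(x))$, I would again use the relation $\lambda_t(x)\sigma_{-t}(x) = 1$. Extracting the coefficient of $t^n$ in this identity for $n\ge 1$ produces a linear recurrence that expresses $\sigma^n(x)$ in terms of $\lambda^1(x),\ldots,\lambda^n(x)$ and $\sigma^0(x),\ldots,\sigma^{n-1}(x)$. An induction on $n$, with the trivial base case $\sigma^1(x) = \lambda^1(x) = P_1^{op}(\lambda^1(x))$, should then identify this expression with $P_n^{op}(\lambda^1(x),\ldots,\lambda^n(x))$ generated by the recursion. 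The companion identity $\lambda^n(x) = P_n^{op}(\sigma^1(x),\ldots,\sigma^n(x))$ follows by the same argument after noting that $\lambda_t(x)\sigma_{-t}(x) = 1$ is symmetric under $t \leftrightarrow -t$, which interchanges the roles of the two opposite $\lambda$-ring structures.

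\textbf{Main obstacle.} The principal technical point is the careful bookkeeping of the alternating signs $(-1)^j$ introduced by $\sigma_{-t}(x)$. These signs must be tracked through both the closed formula for $\psi^n(x)$ and the inductive identification of $P_n^{op}$; in particular, verifying the first few cases (such as $P_1^{op}(X_1) = X_1$ and the resulting $P_2^{op}$) is essential to pin down the sign convention implicit in the stated recursion $P_n^{op} = \sum_{i=0}^{n-1}P_i^{op}X_{n-i}$ (which, depending on how substitution is interpreted, must either absorb the sign $(-1)^{n-i-1}$ or apply it externally). Once the signs are reconciled, both parts of the proposition follow by routine inductions on $n$.
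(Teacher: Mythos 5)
Your derivation of the closed formula for $\psi^n(x)$ is essentially the paper's: both start from $\psi_t(x)=t\frac{d}{dt}\log\lambda_t(x)$, identify the inverse of $\lambda_t(x)$ as a power series with coefficients $(-1)^j\sigma^j(x)$ (the paper routes this through $\lambda_t(-x)$ and the identity $\lambda^j(-x)=(-1)^j\sigma^j(x)$, whereas you invoke $\sigma_{-t}(x)$ directly from the definition of the opposite structure — same thing), and then read off the coefficient of $t^n$ in the Cauchy product.

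For the polynomial identities \eqref{eq:pop}, your approach is genuinely different from the paper's, which simply cites an external reference (\cite[Proposition 2]{Alf22}) rather than giving an argument. Your proposal to extract coefficients from $\lambda_t(x)\sigma_{-t}(x)=1$ and induct is a correct and self-contained route, and your remark on the sign bookkeeping is not merely a hedge but hits a real issue. Carrying out your own suggestion to verify small cases: the coefficient of $t^2$ in $\lambda_t(x)\sigma_{-t}(x)=1$ gives $\lambda^2(x)=(\sigma^1(x))^2-\sigma^2(x)$, whereas the stated recursion yields $P_2^{op}=X_1^2+X_2$, hence $P_2^{op}(\sigma^1(x),\sigma^2(x))=(\sigma^1(x))^2+\sigma^2(x)$. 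These disagree, so the recursion as written is missing an alternating sign; the coefficient extraction actually forces $P_n^{op}=\sum_{i=0}^{n-1}(-1)^{n-i+1}P_i^{op}X_{n-i}$ (equivalently $P_n^{op}=\sum_{k=1}^{n}(-1)^{k+1}X_kP_{n-k}^{op}$). Your induction, once the sign is inserted, goes through, and your observation that the relation $\lambda_t(x)\sigma_{-t}(x)=1$ is symmetric under swapping the roles of $\lambda$ and $\sigma$ correctly explains why the same polynomial $P_n^{op}$ works in both directions of \eqref{eq:pop}.
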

\begin{proof}
As $\lambda$ is a $\lambda$-ring structure, by \eqref{eq:lambdaSum} we have
$$\lambda_t(x + y) = \lambda_t(x)\lambda_t(y).$$
By substituting $x$ and $-x$, we get that $$\lambda_t(0) = \lambda_t(x)\lambda_t(-x),$$ and so
\begin{equation}
\label{eq:lambgen}
\lambda_{t}(-x) = (\lambda_{t}(x))^{-1}.
\end{equation}
Next, we use the derivative of the logarithm on the right hand side of equation \eqref{eq:defpsi2}.
$$
\psi_t(x) = t \cdot \frac{d}{dt} \log(\lambda_t(x)) = t \cdot \frac{\frac{d}{dt} \lambda_t(x)}{\lambda_t(x)} = t \cdot \frac{\sum_{n=1}^{\infty}n\lambda^n(x)t^{n-1}}{\lambda_t(x)} = \lambda_t(x)^{-1}\sum_{n=1}^{\infty}n\lambda^n(x)t^{n}.
$$
Now, we apply equation \eqref{eq:lambgen} to this result, yielding
$$
\psi_t(x) = \lambda_t(-x)\sum_{n=1}^{\infty}n\lambda^n(x)t^{n} = \sum_{n=0}^{\infty}\lambda^n(-x)t^n\sum_{n=1}^{\infty}n\lambda^n(x)t^{n}.
$$
We apply the convolution
$$
\psi_t(x) = \sum_{n=1}^{\infty}\sum_{i=1}^{n}i\lambda^i(x)\lambda^{n-i}(-x)t^n.
$$
and, by equalling the coefficients, we get
$$
\psi^n(x) = \sum_{i=1}^{n}i\lambda^i(x)\lambda^{n-i}(-x).
$$
Now we use the fact that $(-1)^{n}\sigma^{n}(x) = \lambda^{n}(-x)$, which follows from \eqref{eq:lambgen} and the definition of opposite structure to obtain
$$
\psi^n(x) = \sum_{i=1}^{n}(-1)^{n-i}i\sigma^{n-i}(x)\lambda^i(x).
$$
By using the equations from \cite[Proposition 2]{Alf22} to express $\sigma^n(x)$ in terms of $\lambda^n(x)$ (or vice versa), we can expand the right-hand side of this equation to get the desired formula.
\end{proof}
Finally, let us recall the notion of dimension of an element $x\in R$ in a $\lambda$-ring. We say that $x$ is $d$-dimensional for the structure $\lambda$ (respectively, for the structure $\sigma$) if $\lambda_t(x)$ (respectively $\sigma_t(x)$) is a degree $d$-polynomial. This implies that $\lambda^n(x)=0$ or $\sigma^n(x)=0$ for each $n>d$ respectively.

\begin{remark}
\label{rmk:finiteDim}
If $x\in R$ is a $d$-dimensional object for $\lambda$, then $\sigma^n(x)$ and $\psi^n(x)$ can be expressed as polynomials in $d$ variables depending on $\lambda^1(x),\ldots,\lambda^d(x)$ through \eqref{eq:l2a} and \eqref{eq:pop}.
\end{remark}

\section{\texorpdfstring{$\lambda$}{lambda}-ring structures in the Grothendieck ring of Chow motives}
\label{section:Groth}

Let $\VarC$ denote the category of isomorphism classes quasi-projective varieties over $\CC$. The Grothendieck ring of varieties, denoted by $K_0(\VarC)$, is the quotient of the free abelian group generated by $\VarC$ quotiented by the following relation. If $Z$ is a closed subvariety of $X$, then
$$[X]=[X\backslash Z] + [Z],$$
where $[X]$ denotes the class of $X$ in $K(\VarC)$. The product of the ring is induced by taking
$$[X][Y]=[X\times Y]$$
for any pair of varieties $X$ and $Y$.

Similarly, let $\ChowC$ denote the category of Chow motives and let $K_0(\ChowC)$ denote the Grothendieck ring of Chow motives. For both rings, we will denote by $\LL:=[\mathbb{A}^1]$ the Lefschetz object and we will also consider completions of these rings with respect to $\LL$. Concretely, let
$$\hat{K}_0(\VarC):=\left\{\sum_{r\ge 0} [X_r] \LL^{-r} \, \middle | \, [X_r] \in \VarC, \text{ and } \lim_{r\to \infty} (\dim(X_r)-r)=-\infty \right\}$$
denote the completion of $K_0(\VarC)$ and let $\hat{K}_0(\ChowC)$ be the completion of $K_0(\ChowC)$ defined in analogous terms (see \cite{Ma68} or \cite{Ba01} and \cite{Ba02} for more details).

These rings admit two natural opposite $\lambda$-ring structures of geometric nature. Given a variety $X\in \VarC$, let
$$\Sym^n(X):= \frac{\overbrace{X\times X\times \ldots X}^n}{S_n}$$
denote its symmetric product. The map $\Sym^n$ extends to a map $\lambda^n \, : \, \hat{K}_0(\VarC) \longrightarrow \hat{K}_0(\VarC)$ induced by
$$\lambda^n([X])=[\Sym^n(X)].$$
By \cite{LL04}, $\lambda$ defines a $\lambda$-ring structure on $K_0(\VarC)$, but it is not a special $\lambda$-ring structure. Instead, its opposite $\lambda$-ring structure, which we will denote by $\sigma$, is special.

Analogous $\lambda$-ring structures also exist in $\hat{K}_0(\ChowC)$. Given a chow motive $[X]\in \hat{K}_0(\ChowC)$, define $\lambda^n([X])=\Sym^n(X)$ as the image of the map
$$\frac{1}{n!}\sum_{\alpha \in S_n} \alpha \, : \, X^{\otimes n} \longrightarrow X^{\otimes n}$$
and define $\sigma^n([X])=\Alt^n(X)$ as the image of the map
$$\frac{1}{n!}\sum_{\alpha \in S_n} (-1)^{|\alpha|} \alpha \, : \, X^{\otimes n} \longrightarrow X^{\otimes n}.$$
Then, $\lambda$ and $\sigma$ are opposite $\lambda$-ring structures on $\hat{K}_0(\ChowC)$ and $\sigma$ is a special $\lambda$-ring structure (see, for instance \cite{H07}). Moreover, the natural map $\hat{K}_0(\VarC) \longrightarrow \hat{K}_0(\ChowC)$ is a $\lambda$-ring map for these structures and, as a consequence of \cite[Proposition 2.7]{LL03}, the localizations $\hat{K}_0(\VarC)$ and $\hat{K}_0(\ChowC)$ have no additive torsion and, therefore, the identities from Section \ref{section:lambdaRings} relating the $\lambda$ structures $\lambda$ and $\sigma$ with the Adams operations $\psi$ in these rings hold.

Given a variety $X$, it is common to use the notation 
$$Z_X(t):= \sum_{n\ge 0} [\Sym^n(X)] t^n = \lambda_t(X).$$
This is usually called the motivic zeta-function of $X$ \cite{H07}.

\section{The \texttt{motives} package}
\label{section:package}

\subsection{The simplification algorithm}\leavevmode\newline
\label{section:algorithm}
The main strategy followed for the simplification algorithm is based on \cite[Algorithm 1 and Theorem 4]{Alf22}, but some further abstractions and simplifications have been made in order to yield a general implementation of such algorithm. In order to simplify arbitrary algebraic expressions possibly combining $\lambda$, $\sigma$ and $\psi$ in non-trivial ways we use the fact that Adams operations are ring homomorphisms (see, for instance, \cite[Theorem 9.2]{Gri19}) and that we can use the expressions obtained in Section \ref{section:lambdaRings} and in \cite[Section 2]{Alf22} to rewrite any other operators $\lambda^n$ or $\sigma^n$ as polynomials in Adams operations. More concretely, as $\psi^n$ is a ring homomorphism for any $n$, if $P(\psi^{k_1}(x_1),\ldots,\psi^{k_s}(x_s))\in \mathbb{Q}[\psi^{k_1}(x_1),\ldots,\psi^{k_s}(x_s)]$ is a rational polynomial expression depending on Adams operations of some $x_1,\ldots,x_s\in R$, then,
\begin{equation}
\label{eq:psi_polynomial}
\psi^n(P(\psi^{k_1}(x_1),\ldots,\psi^{k_s}(x_s)))=P(\psi^{nk_1}(x_1),\ldots,\psi^{nk_s}(x_s)).
\end{equation}
This could also be done if $P$ is a rational expression instead of a polynomial. We can use this principle recursively to transform any expression tree formed by a composition of ring operations and Adams operations into a polynomial (respectively, rational expression) in a finite set of Adams operations of the leaves of the given expression tree.

Furthermore, the equations \eqref{eq:a2l} and \eqref{eq:a2s} provide a way to compute $\lambda^n(x)$ and $\sigma^n(x)$ as polynomials depending on the Adams operations $\psi^k(x)$ for $k\le n$ for any $x\in R$. The \texttt{motives} package combines these two methods to recursively transform any $\lambda$-ring expression into a polynomial in Adams operations as follows.

The package extends the functionality of \texttt{SymPy}'s expression trees to organize and manipulate the $\lambda$-ring formulas, defining a \code{LambdaRingExpr} class which extends \texttt{SymPy}'s \code{Expr} generic symbolic expression class. A function \code{to_adams} has been added to transform any expression tree into a \texttt{SymPy} polynomial depending on the Adams operations of its leaves. A class \code{Operand} has been defined to characterize admissible ``leaf objects" or ``operands" for $\lambda$-ring expressions. In order for the simplification algorithm to take into account possible non-trivial and domain-specific algebraic relations between different Adams operations of a certain type of objects, the computation and symbolic representation of Adams operations of an element is always delegated to the corresponding \code{Operand} object in the following sense.

Any \code{Operand} object representing an element $x\in R$ in the $\lambda$-ring must implement an \code{_apply_adams} method capable of receiving a positive integer $n$ and a polynomial $P$ of the form
\begin{equation}
    \label{eq:to_adams_polynomial}
    P(\psi^{1}(x),\ldots,\psi^{k}(x),\psi^{k_1}(x_1),\ldots,\psi^{k_s}(x_s))
\end{equation}
depending on Adams operations of $x$ and possibly other elements $x_1,\ldots,x_s$ and performs a partial Adams operation on the polynomial, by replacing the instances of the Adams operations of $x$ on which $P$ depends, namely $\psi^1(x),\ldots, \psi^k(x)$, with the result of applying $\psi^n$ to them, leaving any other Adams operation in the expression $P$ unaffected. Concretely, if $P$ is a polynomial as in \eqref{eq:to_adams_polynomial}, then \code{x._apply_adams(n,P)} would yield
$$P(\psi^{n}(x),\ldots,\psi^{nk}(x),\psi^{k_1}(x_1),\ldots,\psi^{k_s}(x_s)).$$
Depending on the specific context of what $x$ represents, the concrete $\lambda$-ring where this expression lives and possible algebraic relations existing between the Adams operations of $x$, the computed elements $\psi^{kn}(x)$ substituted into $P$ might be new algebraically independent symbolic variables created to represent such Adams operations, some expressions (depending possibly on other Adams operations of $x$ or other elements in the ring), etc. Some more complex manipulations of the polynomials could also be possible. This is left to depend on the implementation of the specific \code{Operand}.

Notice that if $P=P(\psi^{k_1}(x_1),\ldots,\psi^{k_s}(x_s))$ is an Adams polynomial depending on $x_1,\ldots,x_s\in R$, then applying \code{P=}$x_i$\code{._apply_adams(n,P)} iteratively for each $i=1,\ldots,s$ would yield exactly $\psi^n(P)$, by \eqref{eq:psi_polynomial}. This behaviour is actually what defines functionally the \code{_apply_adams} method. Each \code{Operand} must implement it so that an iterated application of the method for each element on which an Adams polynomial depends results in the application of $\psi^n$ to such polynomial.

Counting on the existence of this delegated \code{_apply_adams} method on the leaves of an expression tree, the simplification \code{to_adams} algorithm performs a recursive walk through the tree acting as follows on each traversed node.
\begin{enumerate}
    \item If the node is a built-in \texttt{SymPy} operator node corresponding to a ring operation (\texttt{Add}, \texttt{Mul}, \texttt{Pow}), then perform \code{to_adams} at the operand children of the node to transform the operand expressions into Adams polynomials and perform the operation designated by the node on the corresponding results.
    \item If the node is an \texttt{Adams} operation, representing $\psi^n(T)$ for some child expression tree $T$, then apply \code{to_adams} to the child $T$ to obtain a polynomial (respectively, rational expression)
    $$P(\psi^{k_{1,1}}(x_1),\ldots,\psi^{k_{1,n_1}}(x_1),\ldots,\psi^{k_{s,n_s}}(x_s))$$
    which represents $T$ and depends on the Adams operations of the \code{Operand} objects $x_1,\ldots,x_n$ appearing in the leaves of the tree $T$. Then, as described before, iteratively apply \code{P=}$x_i$\code{.to_adams(n,P)} in order to compute $\psi^n(P)=\psi^n(T)$, delegating the computation of the Adams operations to the operands $x_i$ in $T$.
    \item If the node is a \code{Lambda_} or \code{Sigma} operator, i.e. it represents $\lambda^n(T)$ or $\sigma^n(T)$ respectively for some child expression tree $T$, then apply \code{to_adams} to $T$ in order to obtain an equivalent Adams polynomial $P$ to $T$. Then, compute $\psi^k(P)$ for each $k=1,\ldots,n$ by applying the same technique as for the \texttt{Adams} nodes as follows. For each $k$, apply iteratively $x_i$.\code{_apply_lambda(k,P)} for each leaf $x_i$ in $T$ to obtain a polynomial $P_k:=\psi^k(P)$. Then, use the polynomials $L_n$ and $L_n^{op}$ from \cite[Proposition 3 and Corollary 1]{Alf22}, computed through equations \eqref{eq:a2l} and \eqref{eq:a2s} respectively, to compute
    $$\lambda^n(T)=\lambda^n(P)=L_n^{op}(\psi^1(P),\ldots,\psi^n(P))=L_n^{op}(P_1,\ldots,P_k)$$
    or
    $$\sigma^n(T)=\sigma^n(P)=L_n(\psi^1(P),\ldots,\psi^n(P))=L_n(P_1,\ldots,P_k)$$
    as necessary.
    \item Finally, for terminal \code{Operand} nodes (that is, a leaf of the tree), the implementation of the \code{to_adams} method depends on the specific $\lambda$-ring and element being computed. For example, an integer or free symbolic variable will simply return itself.
\end{enumerate}

An additional \code{to_lambda} method was also implemented for transforming the expression into a polynomial depending only on $\lambda$-powers instead of depending on $\lambda$-operations. This becomes useful when the expression depends on objects which are known to be finite-dimensional for the $\lambda$ structure, so that the resulting simplified polynomial only depends on a known finite set of generators (see Remark \ref{rmk:finiteDim}). Moreover, as explained in Section \ref{section:Groth}, in the Grothendieck rings of varieties and Chow motives $\lambda$-operations have an actual geometric meaning and a decomposition in terms of polynomials of such objects can sometimes be interpreted as very useful geometric decompositions of a variety. To compute \code{to_lambda}, first \code{to_adams} is computed, so that the tree $T$ is transformed into a polynomial of the form $T=P(\psi^{k_1}(x_1),\ldots,\psi^{k_s}(x_s))$. Then Proposition \ref{prop:lambda2adams} and Proposition \ref{prop:adamsRecursive} are used to compute $\psi^{k_i}(x_i)$ as a polynomial in terms of $\lambda^1(x_i),\ldots, \lambda^{k_i}(x_i)$ for each $i$. These polynomials are then substituted into $P$ to yield the desired polynomial expression for the tree $T$. This substitution is again delegated to the leaf operands, which must implement a method \code{_subs_adams}. Analogously to \code{_apply_adams}, the \code{_subs_adams} method is called iteratively on the Adams polynomial for each leaf \code{Operand} appearing in the expression Tree, and calling it for a leaf substitutes all instances of Adams operations of the given operand by their corresponding polynomial expressions in terms of its $\lambda$-powers.

One could remark that, if the starting  expression only depends on $\lambda$-operations to begin with, then transforming them completely to Adams operations $\psi^k$ in order to transform them back to $\lambda$-operations $\lambda^k$ at the end could be unnecessary for simplifying it in some situations. For instance, if an expression has some parts which are already polynomials in $\lambda$, then it would be very inefficient to transform that part of the tree to Adams just to transform it back to $\lambda$. This would actually only be needed if there is another operator ($\lambda$, $\sigma$ or $\psi$) acting on the subtree which would require transforming all operators to Adams in order to compute the corresponding composition. In order to avoid this, an optimization was made in the code to detect these types of situations and only transform $\lambda$ operators into Adams when needed, implemented through a \code{_to_adams_lambda} method.

As the polynomials relating $\lambda$, $\sigma$ and $\psi$ are universal, a singleton class \code{LambdaRingContext} was implemented to compute these polynomials and save them cached for reuse to improve efficiency.
%\newpage
\subsection{Core classes in the package}
\subsubsection{Integration with \code{SymPy}} \leavevmode\newline
As mentioned before, the \code{motives} package is built on top of \code{SymPy} \cite{sympy_2017}. By leveraging \code{SymPy}'s existing capabilities for symbolic computation, the \code{motives} package extends its functionality to support operations using $\lambda$-rings and Grothendieck motives. This integration allows for seamless manipulation of $\lambda$-ring expressions using familiar symbolic expressions while introducing specialized classes and methods to handle the motivic expressions. Let us give some details on this integration.

The \code{motives} package introduces a new class, \code{LambdaRingExpr}, to define an abstract $\lambda-$ring expression which inherits and expands the functionality of \code{SymPy}'s \code{Expr}. This class serves as the base for all expressions in the $\lambda$-ring context. Subclasses of \code{LambdaRingExpr} implement methods to handle specific types of expressions, ensuring that they conform to the operations and properties of $\lambda$-rings.

The other base classes that have been extended are the basic \code{SymPy} operators (\code{Add} for addition, \code{Mul} for element-wise multiplication and \code{Pow} for power) and the \code{Rational} class, which represents all rational numbers. For these classes, no new specific extension has been created, but instead they implement specific methods that are needed to manipulate $\lambda$-rings and motives. For instance, they all need the \code{to_adams} method, which converts an arbitrary expression into a polynomial of Adams operators and operands. For rational numbers, an extension of the following natural $\lambda$-ring structures on the ingegers has been implemented:
    $$\lambda^n(x)=\binom{x+n-1}{n}, \quad \quad \sigma^n(x)=\binom{x}{n}, \quad \quad \psi^n(x)=x, \quad \forall x\in \mathbb{Z}.$$

The bulk of the module is on the operands. All of them inherit either from \code{SymPy}'s \code{Symbol}, or from \code{SymPy}'s \code{AtomicExpr}, which is an abstraction of \code{Symbol}. This gives them many useful properties, such as the associative and commutative properties, but it also allows the expression to handle more powerful methods, such as expand or simplify. The $\lambda$-ring operators (\code{Adams}, \code{Lambda} and \code{Sigma}) are extensions of \code{SymPy}'s \code{Function}.

\subsubsection{Base objects}
The core of the package consists on the following objects:
\begin{itemize}
    \item \code{LambdaRingExpr}:
    This is the abstract class from which all of the implemented classes inherit. It sets the layout that all subclasses should follow. It expands the functionality from \code{SymPy}'s \code{Expr} in order to be able to manage $\lambda-$rings.
    \item \code{Operand}:
    This is the abstract class from which all of the implemented operands inherit. Operands are all classes that act as leaf nodes in the expression tree. It specifies all of the necessary methods operands should implement, as well as defining default behaviours for some of them. These necessary methods are \code{get_adams_var}, \code{get_lambda_var}, \code{_to_adams}, \code{_apply_adams}, \code{_to_adams_lambda} and \code{_subs_adams}.
    % Remove both arriba si quito el _to_adams que devuelve get_adams(1)
    \item \code{Operators}:
    All nodes in an expression tree that are not operands are operators, i.e. all non root nodes. These include the ring operators \code{Sigma} ($\sigma$), \code{Lambda_} ($\lambda$), and \code{Adams} ($\psi$). The ring operators are implemented as their own classes, inheriting from \code{RingOperator}, which itself extends \code{LambdaRingExpr}. The \code{SymPy} operators, such as \code{Add}, \code{Mul}, and \code{Pow}, are not implemented as subclasses. Instead, the necessary methods are added directly to their classes by defining them as external functions and associating them to the appropriate \code{SymPy} methods. This approach ensures compatibility with \code{SymPy}'s internal instantiation mechanisms, which directly create these classes and would otherwise lack the necessary methods.
    \item \code{LambdaRingContext}: Singleton class which computes and caches the universal polynomials relating $\lambda$, $\sigma$ and $\psi$ operators of a $\lambda$-ring, computed through the equations described in Propositions \ref{prop:adams2lambda}, \ref{prop:adams2sigma}, \ref{prop:lambda2adams}, \ref{prop:sigma2adams}, \ref{prop:adamsRecursive} and in \cite[\S2]{Alf22}. The polynomials are computed on demand either recursively or using the explicit formulas described in Section \ref{section:lambdaRings}, and saved for future use each time a new polynomial is generated.
    \item \code{Object1Dim}: Denotes a $1$-dimensional object in a $\lambda$-ring for the structure $\sigma$. Recall that if an element $x$ in a $\lambda$-ring $(R,\lambda,\sigma)$ is $d$-dimensional (for the $\lambda$-structure $\sigma$) then $\sigma^n(x)=0$ for each $n>d$. Thus, a $1$-dimensional object $x$ satisfies
    $$\sigma_t(x)=1+x$$
    and, as a consequence of equations \eqref{eq:oposite} and \eqref{eq:defpsi2}, we have
    $$\lambda^n(x)=\psi^n(x)=x^n.$$
\end{itemize}

These classes are not expected to be used directly, but instead they serve as base classes for the rest of the objects implemented in the package. 

\subsection{Structure of the main \texorpdfstring{$\lambda$}{lambda}-rings implemented in the \texttt{motives} package}
Let us describe the main classes implemented in \code{motives}
\subsubsection{Base \texorpdfstring{$\lambda$}{lambda}-ring expressions}
\begin{itemize}
    \item \code{Polynomial1Var}: Abstract polynomial symbolic variable yielding a polynomial extension of a $\lambda$-ring. Given a $\lambda$-ring $(R,\lambda)$, the ring of polynomials $R[T]$ acquires a natural $\lambda$-ring structure in which $\lambda^n(T)=T^n$ (in particular, it becomes a $1$-dimensional object for the opposite $\lambda$-ring structure to $\lambda$). It allows defining expressions in polynomial extensions of rings of motives, like the Grothendieck ring of motives.
    \item \code{Free}: Element in a free $\lambda$-ring. If no further information is provided, all its $\lambda$-powers will be treated as independent algebraic elements in the ring.
\end{itemize}

\subsection{Grothendieck motives}
Subpackage for handling expressions in extensions of the Grothendieck ring of Chow motives $\hat{K}_0(\ChowC)$. It contains some classes for handling basic motives, as well as some sub-packages for handling motives depending on algebraic curves, algebraic groups and moduli schemes and stacks.
\begin{itemize}
    \item \code{Motive}: Base class for all Grothendieck motives. All other classes inherit from it. It implements methods for computing the symmetric and alternated powers of a motive.
    \item \code{Lefschetz}: Lefschetz object $\LL=[\mathbb{A}^1]$.
    \item \code{Point}: Class of a point.
    \item \code{Proj}: Class of the projective space $[\mathbb{P}^n]$.
\end{itemize}

\subsubsection{Curves}
Submodule with classes for handling and simplifying motives depending on the class of an abstract smooth complex algebraic curve of genus $g$, for a given $g$. Every smooth complex projective curve $X$ admits a canonical decomposition for its Chow motive
$$[X]=h^0(X)+ h^1(X)+h^2(X)=1+h^1(X)+\LL.$$
and, by \cite{Kapranov00}, $h^1(X)$ is $2g$-dimensional for $\lambda$. Concretely, in the following formulas we will use the notation
$$P_X(t)=Z_{h^1(X)}(t)=\sum_{n=0}^{2g} \lambda^n(h^1(X))t^n$$
\begin{itemize}
    \item \code{Curve}: Abstract complex algebraic curve of genus $g$. It implements the identities and decompositions from \cite{Kapranov00} and \cite{H07} articulating expressions depending on the curve $X$ as algebraic expressions in a finite set of algebraic generators depending on its Chow decomposition $h^1(X)$ (see below for details).
    \item \code{CurveChow}: Chow decomposition $h^1(X)$ of a generic genus $g$ curve $X$. It corresponds to $h^1(X)=[X]-1-\LL$. Since $X$ is considered as an abstract algebraic curve, the only algebraic relations between the elements $\lambda^k(h^1(X))$ being considered during simplification are the following identities arising from \cite{Kapranov00} (see also \cite{H07}).
    $$\lambda^k(h^1(X))=\LL^{k-g}\lambda^{2g-k}(X) \quad \quad \forall g<k\le 2g$$
    $$\lambda^k(h^1(X))=0 \quad \quad \forall k>2g$$
    In particular, by default, $\lambda^k(X)$ for $k=1,\ldots,g$ and $\LL$ are considered as algebraically independent objects in the ring of motives. So are $\lambda^i(X)$ and $\lambda^j(Y)$ for $i,j=1,\ldots,g$ if $X$ and $Y$ are different curves.
    \item \code{Jacobian}: Jacobian of a curve $X$. Computed in terms of the motivic zeta function of $h^1(X)$ (see \cite{Kapranov00} and \cite{H07}).
    $$[\Jac(X)]=\sum_{k=0}^{2g} \lambda^k(h^1(X))=P_X(1)$$
    \item \code{Piccard}: Picard variety of a curve $X$. The motive coincides with that of its Jacobian.
\end{itemize}

\subsubsection{Groups}
Submodule with classes of several complex algebraic groups, based on the formulas from \cite{BD07}. For a connected semisimple complex group $G$, the motive is computed as a polynomial in $\LL$ through the following equation from \cite[Proposition 2.1]{BD07}
\begin{equation}
\label{eq:group}
  [G]=\LL^{\dim G} \prod_{i=1}^r (1-\LL^{-d_i}),
\end{equation}
where $d_i$ are the degrees of the basic invariant generators of $G$ and $r$ is its rank. For classical groups, these were obtained from \cite[Table 1, pp. 59]{Hum90}. The implemented groups include the following.
\begin{itemize}
    \item \code{SemisimpleG}: Class of a general connected semisimple complex algebraic group with exponents $d_1,\ldots,d_n$. Computed through equation \eqref{eq:group}.
    \item \code{A}: Generic group of type $A_n$
    \item \code{B}: Generic group of type $B_n$
    \item \code{C}: Generic group of type $C_n$
    \item \code{D}: Generic group of type $D_n$
    \item \code{E}: Exceptional groups $E_n$, for $n=6,7,8$.
    \item \code{F4}: Exceptional group $F_4$.
    \item \code{G2}: Exceptional group $G_2$.
    \item \code{GL}: $\op{GL}_n(\CC)$
    %\item \code{O}: $O_n(\CC)$
    \item \code{SL}: $\op{SL}_n(\CC)$
    \item \code{PSL}: $\op{PSL}_n(\CC)$    
    \item \code{SO}: $\op{SO}_n(\CC)$
    \item \code{Sp}: $\op{Sp}_{2n}(\CC)$
    \item \code{Spin}: $\op{Spin}_n(\CC)$
\end{itemize}

\subsubsection{Moduli schemes and moduli stacks} 
Submodule with classes for describing the motivic class of some moduli spaces of decorated bundles on curves (in the future, other moduli spaces will be added to this package as well, see Section \ref{section:futureWork}). See Section \ref{section:Mozgovoy} for further details on the considered moduli. It is composed by a submodule for handling moduli schemes and another submodule for handling moduli stacks.

The moduli schemes already implemented are:
\begin{itemize}
    \item \code{VectorBundleModuli}: Class of the moduli space of semistable vector bundles of rank $r$ and degree $d$ on a smooth complex algebraic curve $X$ of genus $g\ge 2$, computing using the following equations from \cite{GPHS14}, \cite{San14} and \cite[Theorem 4.11]{Ba01}, assuming that $r$ and $d$ are coprime.
    \begin{align*}
        [M(X,2,d)]=&\frac{[\Jac(X)]P_X(\LL)-\LL^g[\Jac(X)]^2}{(\LL-1)(\LL^2-1)}\\
        [M(X,3,d)]=&\frac{[\Jac(X)]}{(\LL-1)(\LL^2-1)^2(\LL^3-1)}\Big(\LL^{3g-1}(1+\LL+\LL^2)[\Jac(X)]^2\\
&-\LL^{2g-1}(1+\LL)^2[\Jac(X)]P_X(\LL)+P_X(\LL)P_X(\LL^2)\Big)\\
        [M(X,r,d)]=&\sum_{s=1}^r\sum_{\begin{array}{c} r_1+\ldots+r_n=r\\ r_i>0\end{array}}(-1)^{s-1} \frac{P_X(1)^s}{(1-\LL)^{s-1}}\prod_{j=1}^s\prod_{i=1}^{r_j-1}Z_X(\LL^i)\\
        &\prod_{j=1}^{s-1}\frac{1}{1-\LL^{r_j+r_{j+1}}}\LL^{\sum_{i<j}r_ir_j(g-1)+\sum_{i=1}^{s-1}(r_i+r_{i+1})\left\{-(r_1+\ldots+r_j)d/n\right\}}
    \end{align*}
    where $\{x\}$ denotes the decimal part of $x\in \mathbb{R}$, i.e., $\{x\}=x-\lfloor x \rfloor$.
    \item \code{VHS}: Class of the moduli space of variations of Hodge structure, or moduli space of chains of a given type. The currently implemented types are $(1,1)$, $(1,2)$, $(2,1)$, $(1,1,1)$ and $(r)$ (which corresponds to moduli spaces of vector bundles). Further types will be added in the future. Computed using the equations from \cite{GPHS14}, \cite{San14} and the computations from \cite{AO24}. See \cite[\S 8]{AO24} for details.
    \item \code{TwistedHiggsModuli} Class of the moduli space of $L$-twisted Higgs bundles of rank $r$ and degree $d$ on a genus $g\ge 2$ smooth complex algebraic curve. Two methods have been implemented for computing this class.
    \begin{itemize}
        \item \code{TwistedHiggsModuliBB}: Class computed for rank 2 and 3 of the moduli space of $L$-twisted through a Bialynicki-Birula of the moduli using \cite[Corollary 8.1]{AO24} and \cite[Theorem 3]{GPHS14}. These formulas were proven when the rank and degree are coprime.
        \item \code{TwistedHiggsModuliADHM}: Conjectural class for the moduli on arbitrary rank and degree. Based on the formulas from \cite[Conjecture 3]{Moz12}, which are solutions to the motivic ADHM equations \cite{CDP11}.
    \end{itemize}
\end{itemize}

The moduli stacks already implemented are:
\begin{itemize}
    \item \code{BG}: Classifying space for the group $G$, $BG=[pt/G]$ for a connected semisimple complex algebraic group $G$. It is computed as
    $$[BG]=1/[G],$$
    with $[G]$ computed through equation \eqref{eq:group}. This formula is conjectural in general, but it has been proven for special groups (like $\op{GL}_n(\CC)$, $\op{SL}_n(\CC)$ and $\op{Sp}_{2n}(\CC)$) \cite[Example 2.6]{BD07}, for $\op{PSL}_n(\CC)$ if $n=2,3$ \cite[Theorem A]{Ber16}, for $\op{SO}_n(\CC)$ \cite[Theorem 3.7 and Corollary 3.8]{DY16}) and for $\op{O}_n$ \cite[Theorem 3.1 and Corollary 3.2]{TV17}.
    \item \code{Bun}: Moduli stack of principal $G$-bundles on a smooth complex projective curve $X$, computed through the following conjectural formula from \cite[Conjecture 3.4]{BD07}
    $$[\mathfrak{Bun}(X,G)]=|\pi_1(G)|\LL^{(g-1)\dim G} \prod_{i=1}^r Z_C(\LL^{-d_i}),$$
    where $d_i$ are the exponents of the group and $r$ its rank. The conjecture was proven for $G=\op{SL}_n(\CC)$ in \cite[\S 6]{BD07}.
\end{itemize}

\subsection{Usage example}

\code{motives} can simplify any expression into a polynomial in Adams or  $\lambda$ operations by calling the methods \code{to_adams()} or \code{to_lambda()} on the expression respectively. An example goes as follows. Calling \code{to_adams()} on the following expression
$$
    \lambda^2\left(\psi^2(x)-\frac{y}{2}\right) 
$$
by using this Python code
\noindent
\begin{lstlisting}
x, y = Free("x"), Free("y")

expr = (x.adams(2) - y / 2).lambda_(2)
print(expr.to_adams())
\end{lstlisting}
\noindent
yields the following equivalent polynomial expression depending solely on $x$, $y$ and the necessary Adams operations of $x$ and $y$.
$$
    -\frac{\psi^2(y)}{4}+\frac{\psi^4(x)}{2}+\frac{(\psi^2(x)-\frac{y}{2})^2}{2}.
$$
In the code, $x$ and $y$ were declared as \code{Free} objects, so no further assumptions were made on them and the computation was performed in the free $\lambda$-ring spanned by $x$ and $y$. If \code{to_lambda()} is called on the expression instead of \code{to_adams()}, we get the following polynomial depending on the $\lambda$ powers of $x$ and $y$ instead of their Adams operations
$$
\frac{-x^4}{2}+2x^2\lambda^2(x)-2x\lambda^3(x)+\frac{y^2}{4}-(\lambda^2(x))^2-\frac{\lambda^2(y)}{2}+2\lambda^4(x)+\frac{(-x^2-\frac{y}{2}+2\lambda^2(x))^2}{2},
$$
which, if we simplify it using the \code{simplify()} method provided by \code{Sympy}:
\noindent
\begin{lstlisting}
print(expr.to_lambda().simplify())
\end{lstlisting}
\noindent
yields the following polynomial still depending on the $\lambda$ powers of $x$ and $y$.
$$
\frac{x^2 y}{2} - 2x \lambda^3(x) + \frac{3y^2}{8} - y \lambda^2(x) + \lambda^2(x)^2 - \frac{\lambda^2(y)}{2} + 2\lambda^4(x).
$$

\section{Applications to the computation of the motive of twisted Higgs bundles}
\label{section:Mozgovoy}

Let $X$ be a smooth complex projective curve of genus $g\ge 2$ and let $L$ be a line bundle on $X$ of degree $\deg(L)=2g-2+p$ with $p>0$. An $L$-twisted Higgs bundle of rank $r$ on $X$ is a pair $(E,\varphi)$ consisting on a rank $r$ vector bundle $E$ and a homomorphism $\varphi\in H^0(\End(E)\otimes L)$. We say that $(E,\varphi)$ is semistable if for any subbundle $F\subsetneq E$ such that $\varphi(F)\subseteq F\otimes L$ the following inequality holds.
$$\frac{\deg(F)}{\rk(F)}\le \frac{\deg(E)}{\rk(E)}.$$

Let $\SM_L(X,r,d)$ denote the moduli space of semistable $L$-twisted Higgs bundles on $X$ of degree $r$ and rank $d$. Through this section, assume that $r$ and $d$ are coprime. There is a natural $\CC^*$ action on the moduli space given by $t\cdot (E,\varphi)=(E,t\varphi)$, giving the moduli space the structure of a smooth semiprojective variety of dimension $1+r^2(2g-2+p)$. In \cite{Moz12}, Mozgovoy stated the following conjectural formula for this moduli space, which is a solution to the motivic ADHM recursion formula \cite{CDP11}.

\begin{conjecture}{ {\cite[Conjecture 3]{Moz12}}}
\label{conj:ADHM}
For each integer $n\ge 1$, let
$$\SH_n(t)=\sum_{\lambda\in \SP(n)} \prod_{s\in d(\lambda)} (-t^{a(s)-l(s)} \LL^{a(s)})^p t^{(1-g)(2l(s)+1)}Z_X(t^{h(s)}\LL^{a(s)}),$$
where $\SP(n)$ denotes the set of ordered partitions of $n$, a partition $\lambda\in \SP(n)$ is considered as a non-increasing sequence of positive integers $\lambda_1\ge \lambda_2 \ge \cdots \ge \lambda_k>0$ summing $n$ and, for each $\lambda\in \SP_n$, 
$$d(\lambda)=\{(i,j)\in \ZZ^2 | 1\le i, \, 1\le j\le \lambda_i\},$$
$$a(i,j)=\lambda_i-j, \quad l(i,j)=\max\{l|\lambda_l\ge j\}-i, \quad \quad h(i,j)=a(i,j)+l(i,j)+1.$$
From $\SH_n(t)$, define $H_r(t)$ for each $r\ge 1$ as follows
\begin{equation*}
\sum_{r\ge 1} H_r(t) T^r =(1-t)(1-\LL t) \sum_{j\ge 1} \sum_{k\ge 1} \frac{(-1)^{k+1}\mu(j)}{jk}  \Bigg ( \sum_{n\ge 1} \psi_j[\SH_n(t)] T^{jn} \Bigg)^k.
\end{equation*}
Then $H_r(t)$ is a polynomial in $t$ and
\begin{equation}
\label{eq:ADHM}
[\SM_L(X,r,d)] =M_{g,r,p}^{\op{ADHM}}:= (-1)^{pr} \LL^{r^2(g-1)+p\frac{r(r+1)}{2}} H_r(1).
\end{equation}
\end{conjecture}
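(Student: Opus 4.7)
The plan is to verify the conjectural equality \eqref{eq:ADHM} in the Grothendieck ring of Chow motives case by case, for rank $r\in\{2,3\}$ with $\gcd(r,d)=1$, every genus $2\le g\le 18$, and every twisting degree $2g-1\le\deg(L)\le 2g+18$. The strategy is to compute both sides of \eqref{eq:ADHM} symbolically with the \code{motives} package, subtract them, and reduce the difference to a polynomial in a finite set of algebraically independent generators which is then checked to be identically zero.

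First, I would compute the \emph{left-hand side}. In rank $2$ and $3$ with $\gcd(r,d)=1$, $[\SM_L(X,r,d)]$ is given by the proven closed expressions of \cite[Corollary 8.1]{AO24} and \cite[Theorem 3]{GPHS14}, which are packaged as \code{TwistedHiggsModuliBB}. This yields an expression in $\LL$ and the $\lambda$-powers of $h^1(X)$ for an abstract genus-$g$ curve.

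Next, I would translate the ADHM formula verbatim into a \code{LambdaRingExpr}. The motivic zeta function $Z_X(t^{h(s)}\LL^{a(s)})$ is provided by the \code{Curve} class through the Kapranov identities of \cite{Kapranov00}, so $\SH_n(t)$ can be assembled as a sum over partitions of $n$ of products of such $Z_X$-values. The Adams operation $\psi_j$ on $\SH_n(t)$ is encoded via the \code{Adams} operator and propagated through the expression tree by \code{to_adams}. Since only the coefficient of $T^r$ for $r\le 3$ matters, the outer sums need only be truncated to indices with $jn\le r$ and $k\le r$. I would then extract $H_r(t)$ as that coefficient, evaluate at $t=1$, and multiply by the prefactor $(-1)^{pr}\LL^{r^2(g-1)+pr(r+1)/2}$. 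To keep $t$ symbolic, I would declare it as a \code{Polynomial1Var} so the package treats it as a $1$-dimensional object on which $\lambda^n$ and $\psi^n$ act as powers. The difference of the two sides is finally passed through \code{to_lambda()} and \code{simplify()} to obtain a canonical polynomial in $\{\LL,\lambda^1(h^1(X)),\dots,\lambda^g(h^1(X))\}$; see Remark \ref{rmk:finiteDim}. Equality is certified by this polynomial being zero.

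The main obstacle is the combinatorial and algebraic explosion. The number of partitions contributing to $\SH_n(t)$ grows rapidly, each $\psi_j$ commuted through the expression via Propositions \ref{prop:lambda2adams} and \ref{prop:adamsRecursive} expands into a polynomial whose size grows with $n$ and $j$, and the generator set grows linearly in $g$, so the resulting polynomials become very large; reaching $g=18$ and $p\le 20$ depends crucially on the efficiency improvements described in Section \ref{section:algorithm}, including caching universal polynomials in \code{LambdaRingContext}, avoiding unnecessary $\lambda\leftrightarrow\psi$ conversions via \code{_to_adams_lambda}, and enforcing the relations $\lambda^k(h^1(X))=\LL^{k-g}\lambda^{2g-k}(h^1(X))$ and $\lambda^k(h^1(X))=0$ for $k>2g$ from the \code{CurveChow} implementation. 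A subtler point is that $H_r(t)$ is only a polynomial in $t$ after nontrivial cancellation against the prefactor $(1-t)(1-\LL t)$, so intermediate expressions are singular at $t=1$; keeping $t$ symbolic throughout and clearing the prefactor algebraically before substituting $t=1$ exactly once avoids this pitfall.
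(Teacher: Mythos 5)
Your proposal matches the paper's own verification strategy in Section \ref{section:Mozgovoy}: build $M_{g,r,p}^{\op{BB}}$ from Theorem \ref{thm:BB}, assemble $M_{g,r,p}^{\op{ADHM}}$ symbolically, collect and cancel the $t-1$ factors before evaluating $H_r(1)$, reduce both sides via \code{to_lambda} to polynomials in $\LL$ and the $\lambda$-powers of $h^1(X)$, and check that the difference vanishes. As you correctly frame it, this is a finite-case verification yielding Theorem~\ref{thm:main} rather than a proof of the conjecture in general.
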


This conjecture implies analogous formulas for the E-polynomial of the moduli space and for the number of rational points in the moduli space, which were later proven in \cite[Theorem 1.1 and Theorem 4.6]{MG19}.

On the other hand, in \cite{AO24}, the following formulas were proven for the virtual classes of moduli spaces of $L$-twisted Higgs bundles of degree $d$ and rank at most $3$ by computing the Bialynicki-Birula decomposition of the variety.

\begin{theorem}[ {\cite[Theorem 3]{GPHS14}, \cite[Corollary 8.1]{AO24}}]
\label{thm:BB}
The following equalities hold in $\hat{K}_0(\VarC)$.

\begin{enumerate}
\item For $r=1$
\begin{equation}
\label{eq:BB1}
[\SM_{L}(X,1,d)]=M^{\op{BB}}_{g,1,p}:=[\Jac(X)\times H^0(X,L^\vee)]=\LL^{g-1+p}P_X(1)
\end{equation}
\item For $r=2$, if $(2,d)=1$,
\begin{equation}
\label{eq:BB2}
\begin{split}
[\SM_{L}(X,2,d)]=M^{\op{BB}}_{g,2,p} &:= \frac{\LL^{4g-4+4p}\Big(P_X(1)P_X(\LL)-\LL^gP_X(1)^2\Big)}{(1-\LL)(1-\LL^2)}\\
& \ \ \ + \LL^{4g-4+3p}P_X(1)\sum_{i=1}^{\lfloor\frac{2g-1+p}{2}\rfloor}\lambda^{2g-1+p-2i}([X]).
\end{split}
\end{equation}
\item For $r=3$, if $(3,d)=1$,
\begin{multline}
\label{eq:BB3}
[\SM_{L}(X,3,d)] = \frac{\LL^{9g-9+9p}P_X(1)}{(\LL-1)(\LL^2-1)^2(\LL^3-1)}\Big(\LL^{3g-1}(1+\LL+\LL^2)P_X(1)^2\\
-\LL^{2g-1}(1+\LL)^2P_X(1)P_X(\LL)+P_X(\LL)P_X(\LL^2)\Big)\\
+\frac{\LL^{9g-9+7p}P_X(1)^2}{\LL-1}\sum_{i=1}^{\lfloor\frac{1}{3}+\frac{2g-2+p}{2}\rfloor}\bigg(\LL^{i+g}\lambda^{-2i+2g-2+p}([X]+\LL^2) -\lambda^{-2i+2g-2+p}([X]\LL+1)\bigg)\\
 +\frac{\LL^{9g-9+7p}P_X(1)^2}{\LL-1}\sum_{i=1}^{\lfloor\frac{2}{3}+\frac{2g-2+p}{2}\rfloor}\bigg(\LL^{i+g-1}\lambda^{-2i+2g-1+p}([X]+\LL^2) -\lambda^{-2i+2g-1+p}([X]\LL+1)\bigg)\\
+\LL^{9g-9+6p}P_X(1)\sum_{i=1}^{2g-2+p} \sum_{j= \max\{2-2g-p+i, 1 -i\}}^{\lfloor (2g-1+p-i)/2\rfloor}\lambda^{-i+j+2g-2+p}([X])\lambda^{-i-2j+2g-1+p}([X]).
\end{multline}
\end{enumerate}
\end{theorem}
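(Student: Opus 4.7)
The plan is to reduce Mozgovoy's conjecture for rank $r\le 3$ to a finite family of polynomial identities in the canonical presentation of a finitely generated subring of $\hat{K}_0(\ChowC)$, and then to discharge each of them as a symbolic computation with the \code{motives} package. Theorem \ref{thm:BB} already supplies an unconditionally proven closed form $M^{\op{BB}}_{g,r,p}$ for $[\SM_L(X,r,d)]$ for $r\in\{1,2,3\}$ under the coprimality assumption $(r,d)=1$, and the class is independent of the choice of $d$. Conjecture \ref{conj:ADHM} provides an a priori different expression $M^{\op{ADHM}}_{g,r,p}$. Since equation \eqref{eq:BB1} and the evaluation of \eqref{eq:ADHM} at $r=1$ manifestly coincide, the content of the theorem is the identity
\begin{equation*}
M^{\op{ADHM}}_{g,r,p}=M^{\op{BB}}_{g,r,p}
\end{equation*}
in $\hat{K}_0(\ChowC)$ for $r\in\{2,3\}$, for every genus $g$ with $2\le g\le 18$ and every $p=\deg(L)-2g+2$ with $1\le p\le 20$.

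Both sides are built from $\LL$, from $[X]$, and from $\lambda$, $\sigma$ and Adams operations applied to motives derived from them; the ADHM side in particular depends on the $\psi_j[\SH_n(t)]$ appearing in Conjecture \ref{conj:ADHM}. Writing $[X]=1+h^1(X)+\LL$ and invoking Kapranov's $2g$-dimensionality of $h^1(X)$, Remark \ref{rmk:finiteDim} guarantees that $M^{\op{ADHM}}_{g,r,p}-M^{\op{BB}}_{g,r,p}$ admits a canonical representation as a polynomial in the $g+1$ generators $\LL,\lambda^1(h^1(X)),\ldots,\lambda^g(h^1(X))$, the higher $\lambda^k$'s being eliminated by the Kapranov duality relations already encoded in the \code{CurveChow} operand. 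The two motives agree in $\hat{K}_0(\ChowC)$ if and only if this canonical polynomial vanishes, and the reduction of both sides to that canonical form is exactly what the simplification routines of Section \ref{section:algorithm} are designed to produce.

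Concretely, for each pair $(g,p)$ in the stated range I would instantiate a \code{Curve} of genus $g$ together with the corresponding \code{TwistedHiggsModuliBB} and \code{TwistedHiggsModuliADHM} objects, form their difference, and call \code{to_lambda()} followed by \code{simplify()}. The routine first applies \code{to_adams()}, which uses \eqref{eq:psi_polynomial} together with Propositions \ref{prop:adams2lambda} and \ref{prop:adams2sigma} to push every $\lambda$ and $\sigma$ to the leaves as Adams operations, and then recombines the result via Propositions \ref{prop:lambda2adams} and \ref{prop:adamsRecursive} into a polynomial in the Kapranov generators. Confirming that every one of these finitely many polynomial differences reduces to zero establishes the theorem, and the rank $1$ case can be verified in the same framework at negligible cost.

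The main obstacle is computational, not conceptual. The ADHM formula is a plethystic sum over partitions of $n\le r$ whose terms carry Adams operations of $\SH_n(t)$, and the intermediate Adams polynomials generated by \code{to_adams} grow quickly both in the number of monomials and in the size of their integer coefficients as $g$ and $p$ grow; the predecessor implementation of \cite{Alf22} already exhausted $128$\,GB of RAM at $g=11$. The improvements needed to push the verification to $g=18$ come from two places: first, replacing the nested substitutions of \cite[Proposition 3 and Corollary 1]{Alf22} by the closed multinomial expansions of Propositions \ref{prop:adams2lambda}--\ref{prop:adamsRecursive}, cached globally through a \code{LambdaRingContext} singleton; and second, the \code{_to_adams_lambda} short-circuit that avoids unnecessary $\lambda\to\psi\to\lambda$ round-trips on subtrees already expressed in $\lambda$-form. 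Verifying that these optimisations are enough to process every $(g,p)$ in the claimed range within the available memory and time budget is the genuine difficulty, after which the resulting finite family of symbolic equalities constitutes the proof of the theorem.
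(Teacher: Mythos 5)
Your proposal does not prove the statement in question; it proves a different one. The statement here is Theorem \ref{thm:BB} itself, i.e.\ the closed formulas \eqref{eq:BB1}, \eqref{eq:BB2}, \eqref{eq:BB3} expressing the actual class $[\SM_{L}(X,r,d)]$ in $\hat{K}_0(\VarC)$ in terms of $\LL$ and $P_X$. In the paper this is an imported result: it is exactly \cite[Theorem 3]{GPHS14} and \cite[Corollary 8.1]{AO24}, proved there by geometric means --- one uses the $\CC^*$-action $t\cdot(E,\varphi)=(E,t\varphi)$ on the smooth semiprojective moduli space, takes its Bialynicki--Birula decomposition, identifies the fixed loci as moduli spaces of $L$-twisted chains/variations of Hodge structure of the relevant types in rank $\le 3$ (together with the moduli of vector bundles and the Jacobian for the lowest stratum), computes the motives of these fixed components, and sums the strata weighted by powers of $\LL$ given by the dimensions of the positive-weight flow spaces. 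Your argument contains none of this; on the contrary, it takes Theorem \ref{thm:BB} as an ``unconditionally proven closed form'' and then outlines the symbolic verification that $M^{\op{ADHM}}_{g,r,p}=M^{\op{BB}}_{g,r,p}$. That is the content of Theorem \ref{thm:main} (equivalently Theorem \ref{thm:intro}), not of Theorem \ref{thm:BB}, so as a proof of the present statement it is circular: the key input is assumed rather than established.

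There is also a structural reason the proposed route cannot work for this statement even in principle. Theorem \ref{thm:BB} asserts an identity between a geometric class $[\SM_{L}(X,r,d)]$ and an explicit expression, valid for every $g\ge 2$ and every $p>0$ (with $(r,d)=1$), whereas a computation in the \code{motives} package can only compare two already-given symbolic expressions inside the finitely generated $\lambda$-subring determined by $\LL$ and $\lambda^1(h^1(X)),\ldots,\lambda^g(h^1(X))$, and only for finitely many instantiated pairs $(g,p)$. No amount of simplification via Propositions \ref{prop:adams2lambda}--\ref{prop:adamsRecursive} links such an expression to the class of the moduli space itself; that link is precisely what the Bialynicki--Birula analysis of \cite{GPHS14} and \cite{AO24} provides. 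If your goal was Theorem \ref{thm:main}, your outline is essentially the paper's own strategy (and correct as far as it goes); but as a proof of Theorem \ref{thm:BB} it is missing the entire geometric argument, and a finite family of polynomial checks could not replace it.
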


A direct comparison between the conjectured expression \eqref{eq:ADHM} and the proved expressions \eqref{eq:BB1}, \eqref{eq:BB2} and \eqref{eq:BB3} is not immediate, specially due to the Adams operations acting on the terms $\SH_n(t)$ in \eqref{eq:ADHM} and the sums in the formulas \eqref{eq:BB2} and \eqref{eq:BB3}. In \cite{Alf22}, a MATLAB code was written for manipulating these precise expressions using the general simplification algorithm described in that algorithm. As a result, it was proven that the conjectural formulas $M_{g,r,p}^{\op{ADHM}}$ from \ref{conj:ADHM} and the proved formulas $M_{g,r,p}^{\op{BB}}$ from Theorem \ref{thm:BB} agree in $\hat{K}_0(\ChowC)$ in the following cases:
\begin{itemize}
\item $X$ is any curve of genus $g$ with $2\le g\le 11$,
\item $L$ is any line bundle on $X$ of degree $2g-1\le \deg(L) \le 2g+18$, and
\item $1\le r \le 3$.
\end{itemize}
The computations were carried in an Intel(R) Xeon(R) E5-2680v4@2.40GHz with 128GB of RAM, and it was found that the main limitation for extending the verification beyond these limits was the over-exponential growth of the memory usage of the MATLAB code with respect to the genus. The test for $r=3$, $g=11$ curves and $\deg(L)=2g+18$ used all the available RAM in the machine. As a way to test the general purpose simplification library proposed in this work, we have revisited this problem using the new \code{motives} package and compared its performance with the previous ad-hoc MATLAB implementation of the algorithm. First, the following simplifications, which were also applied in \cite{Alf22}, were performed in the equation \eqref{eq:BB3}. As $\lambda$ is a $\lambda$-ring structure and $\LL$ is a 1-dimensional object, then for any $n\ge 0$
$$\lambda^n([X]+\LL^2) = \sum_{k=0}^n \lambda^k([X]) \lambda^{n-k}(\LL^2)=\sum_{k=0}^n \lambda^k([X])\LL^{2n-2k},$$
$$\lambda^n([X]\LL+1)=\sum_{k=0}^n \lambda^k([X]\LL) = \sum_{k=0}^n \lambda^k([X])\LL^k.$$
The \code{to_lambda} method from \code{motives} was then applied to both expressions for the motive of the moduli space. In the case of \eqref{eq:ADHM}, terms in $t-1$ were collected and cancelled before evaluating $H_r(1)$, and the resulting polynomials were compared through a standard \texttt{SymPy} comparison of symbolic expressions.

We run an instance of \code{motives} in the same machine with the same RAM limitations for this problem. The library was able to greatly surpass the capabilities of the previous ad-hoc code and it compared both expressions successfully, showing that they are equal in $\hat{K}_0(\ChowC)$ whenever
\begin{itemize}
\item $X$ is any curve of genus $g$ with $2\le g\le 18$,
\item $L$ is any line bundle on $X$ of degree $2g-1\le \deg(L) \le 2g+18$, and
\item $1\le r \le 3$.
\end{itemize}
As a consequence, Theorem \ref{thm:BB} implies that Mozgovoy's conjecture holds under those conditions, yielding the following theorem.
\begin{theorem}
\label{thm:main}
Let $X$ be any smooth complex projective curve of genus $2\le g\le 18$. Let $L$ be any line bundle on $X$ of degree $\deg(L)=2g-2+p$ with $0<p\le 20$. If $r\le 3$ and $d$ is coprime with $r$, then, in $\hat{K}_0(\ChowC)$,
$$[\SM_L(X,r,d)] =  M_{g,r,p}^{\op{BB}}.$$
\end{theorem}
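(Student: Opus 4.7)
The plan is to derive the identity as an immediate consequence of Theorem \ref{thm:BB} by functoriality of the $\lambda$-ring homomorphism from the Grothendieck ring of varieties to that of Chow motives described in Section \ref{section:Groth}. First I would check that the hypotheses of Theorem \ref{thm:main} lie inside the scope of Theorem \ref{thm:BB}: the bounds $2 \le g \le 18$ are contained in $g \ge 2$, the bound $0 < p \le 20$ is contained in $p > 0$, and the conditions $r \le 3$ together with $\gcd(r, d) = 1$ coincide with those of each case of Theorem \ref{thm:BB}. Under these hypotheses, Theorem \ref{thm:BB} provides the equality $[\SM_L(X, r, d)] = M_{g,r,p}^{\op{BB}}$ in $\hat{K}_0(\VarC)$ for each of $r = 1, 2, 3$, where the right-hand side is defined by the explicit expressions \eqref{eq:BB1}, \eqref{eq:BB2} and \eqref{eq:BB3}.

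Next I would transport this equality to $\hat{K}_0(\ChowC)$. As recalled in Section \ref{section:Groth}, the natural map $\hat{K}_0(\VarC) \longrightarrow \hat{K}_0(\ChowC)$ is a ring homomorphism compatible with both mutually opposite $\lambda$-structures $\lambda$ (symmetric products) and $\sigma$ (alternating products), and it extends to the $\LL$-adic completions used in the definition of $M_{g,r,p}^{\op{BB}}$. The expression $M_{g,r,p}^{\op{BB}}$ is built from the generators $[X]$, $[\Jac(X)] = P_X(1)$, $\LL$, the motivic zeta factors $P_X(\LL^j)$, and symmetric-power operations such as $\lambda^k([X])$, $\lambda^k([X] + \LL^2)$ and $\lambda^k([X]\LL + 1)$. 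Since each of these terms is preserved by the $\lambda$-ring map, its image is the same symbolic expression now interpreted in $\hat{K}_0(\ChowC)$, and the equality of Theorem \ref{thm:BB} descends verbatim to yield the statement of Theorem \ref{thm:main}.

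There is no substantive conceptual obstacle in this proof: the entire argument is functoriality applied to an already-established theorem. The only care required is in verifying that the rational and infinite-series constituents of $M_{g,r,p}^{\op{BB}}$ (such as $Z_X(\LL^i)$ and denominators of the form $(1-\LL^k)$) transport cleanly under the $\LL$-adic completion, which is immediate from the construction of $\hat{K}_0(\ChowC)$ recalled in Section \ref{section:Groth}. In the broader arc of the paper, the nontrivial computational work performed by the \code{motives} package is devoted not to proving Theorem \ref{thm:main} itself but to establishing the complementary identity $M_{g,r,p}^{\op{ADHM}} = M_{g,r,p}^{\op{BB}}$ in $\hat{K}_0(\ChowC)$ throughout the stated range; combined with Theorem \ref{thm:main} this upgrades the statement to a proof of Mozgovoy's Conjecture \ref{conj:ADHM} on the specified range of $g$, $p$, $r$ and $d$.
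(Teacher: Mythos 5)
Your functoriality argument does prove the equality exactly as displayed: Theorem \ref{thm:BB} gives $[\SM_L(X,r,d)]=M^{\op{BB}}_{g,r,p}$ in $\hat{K}_0(\VarC)$ for every $g\ge 2$, $p>0$ and $r\le 3$ coprime to $d$, and applying the natural $\lambda$-ring homomorphism $\hat{K}_0(\VarC)\to\hat{K}_0(\ChowC)$ recalled in Section \ref{section:Groth} transports it to the ring of Chow motives. But note that your proof never uses the hypotheses $g\le 18$ and $p\le 20$, and that should be a warning sign. As the sentence introducing Theorem \ref{thm:main} and Theorem \ref{thm:intro} make clear, the theorem is meant to record that \emph{Mozgovoy's conjectural formula} (Conjecture \ref{conj:ADHM}) holds in the stated range, i.e.\ $[\SM_L(X,r,d)]=M^{\op{ADHM}}_{g,r,p}$ in $\hat{K}_0(\ChowC)$, equivalently (given Theorem \ref{thm:BB}) that $M^{\op{BB}}_{g,r,p}=M^{\op{ADHM}}_{g,r,p}$; the superscript in the displayed equation should be read in that light. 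This is precisely where the numerical bounds on $g$ and $p$ come from.

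Consequently the paper's proof of Theorem \ref{thm:main} is not the functoriality step (which it uses only implicitly) but the computational verification: the \texttt{motives} package simplifies both $M^{\op{BB}}_{g,r,p}$ and $M^{\op{ADHM}}_{g,r,p}$ via \texttt{to\_lambda} into polynomials in the generators $\lambda^k(h^1(X))$ and $\LL$ and checks that they coincide in $\hat{K}_0(\ChowC)$ for all $2\le g\le 18$, $0<p\le 20$, $r\le 3$; Theorem \ref{thm:BB} then converts this identity of formulas into a statement about $[\SM_L(X,r,d)]$. In your write-up this verification appears only in the closing remark about ``the broader arc of the paper'', i.e.\ it is excluded from the proof proper. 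Under the intended reading of the theorem this is a genuine gap: without the machine check (or some other argument identifying $M^{\op{BB}}_{g,r,p}$ with $M^{\op{ADHM}}_{g,r,p}$), nothing in your argument touches the ADHM side, and the restrictions on $g$ and $p$ remain unexplained. Under the strictly literal reading your proof is correct, but it establishes only a restatement of Theorem \ref{thm:BB} in $\hat{K}_0(\ChowC)$, which is weaker than what the paper is proving at this point.
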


\begin{figure}
    \centering
    %\includesvg[width=0.7\linewidth]{data/pol_size_analysis.svg}
    \includegraphics[width=0.7\linewidth]{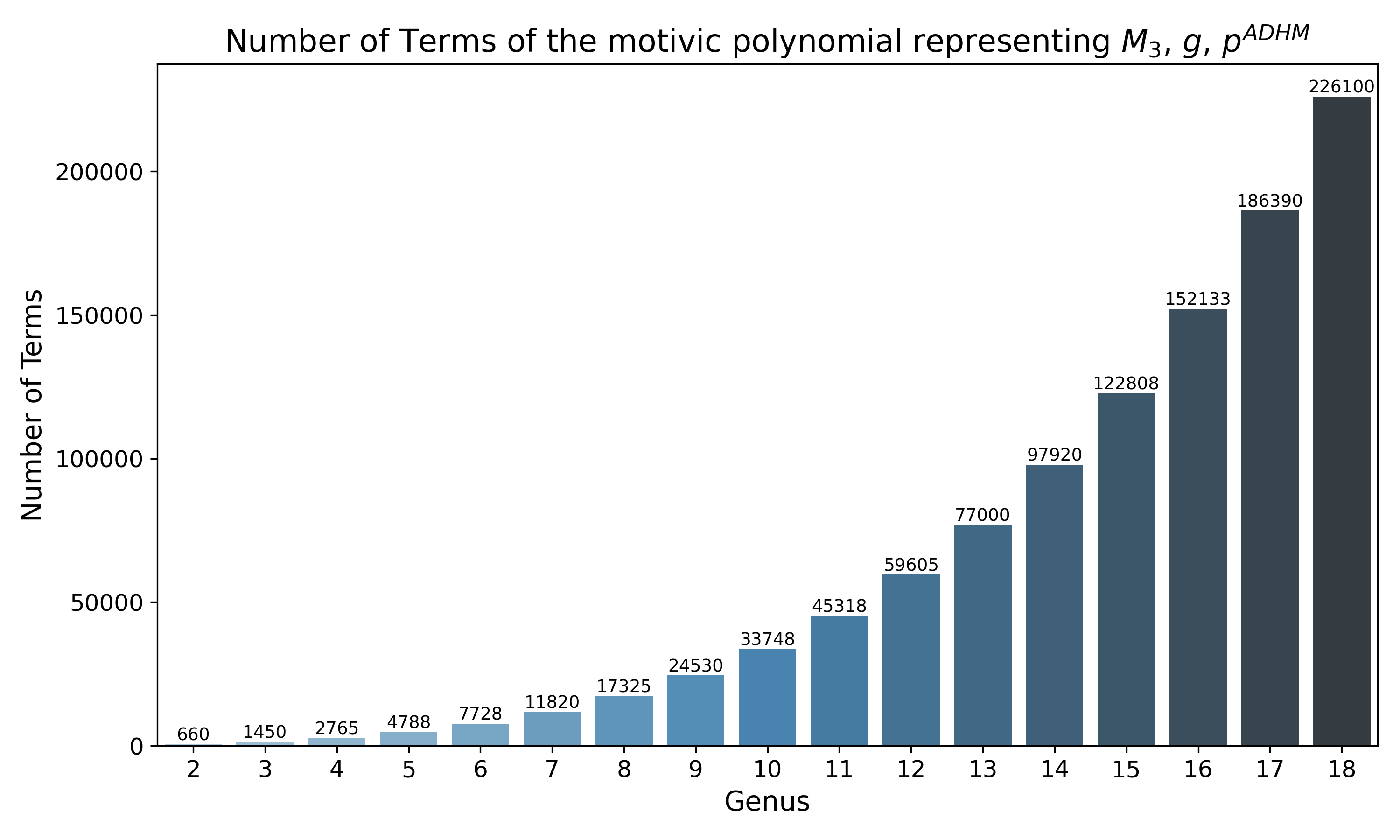}
    \caption{Size comparison between the polynomials generated for $M_{g,3,20}^{\op{BB}}=M_{g,3,20}^{\op{ADHM}}$ after simplification for different $g$, all taken with $r=3$ and $p=20$.}
    \label{fig:polSize}
\end{figure}

To put in perspective the jump from genus $g=11$ to genus $g=18$, one should consider how the expressions grow with respect to the genus. The moduli space has dimension $\dim(\SM_L(X,r,d))=r^2\deg(L)+1$, so the largest moduli space processed in \cite{Alf22}, corresponding to genus $11$, rank $3$ and $\deg(L)=2g+18=40$ had dimension $361$. The largest moduli space considered in the current work, corresponding to genus 18, rank 3 and $\deg(L)=2g+18=54$, has now dimension $487$. The algorithm simplifies the motivic expressions into a multivariate polynomial in $g+1$ (independent) generators and its degree is at most the dimension of the moduli space. In \cite{Alf22}, the maximum test performed resulted in a degree 361 polynomial in 12 variables with 45318 terms. In contrast, the new results were attained for a moduli space whose motive is represented by a degree 487 polynomial in 19 variables with 226100 terms, 5 times larger that the largest expression which the old code was able to handle in that given machine. Figure \ref{fig:polSize} shows the increase in the number of terms of the computed simplified expression. It is important to notice that the computational complexity of manipulating and simplifying the type of nested polynomial expressions arising naturally from the application of the method to the given expressions raises significantly when the number of variables of the polynomials increases, even slightly. Consider, for instance, that even though the obtained final simplified polynomials are relatively small in this case, polynomials of degree 361 in 12 variables can have at most $\sim 3.4\cdot 10^{24}$ possible terms, whereas polynomials of degree 487 in 19 variables could have up to $\sim 3.1 \cdot 10^{36}$ terms. 

Figure \ref{fig:timeComparison} shows a comparison between the time taken by the algorithm in \cite{Alf22} and the \code{motives} library to simplify and compare the motivic expressions $M_{g,r,p}^{BB}$ and $M_{g,r,p}^{ADHM}$ for $r=3$, $p=20$ and different values of $g$. The variation in execution time with respect to other choices of $p$ is relatively small, so only the time needed for computing the biggest moduli for each $g$ ($r=3$ and $p=20$) with the two methods is shown in the figure for clarity. The chart for the \cite{Alf22} algorithm ends at $g=11$ as the memory limit for the machine was reached at that point. During the experiments, running the \code{motives} library for $g=18$ also used a significant part of the memory, but there was still room to continue increasing the genus. The test of the library was stopped at $g=18$ due to the exponential growth of the run time of the algorithm for the given problem.

\begin{figure}
    \centering
    %\includesvg[width=0.7\linewidth]{data/time_comparison_matlab_vs_motives.svg}
    \includegraphics[width=0.7\linewidth]{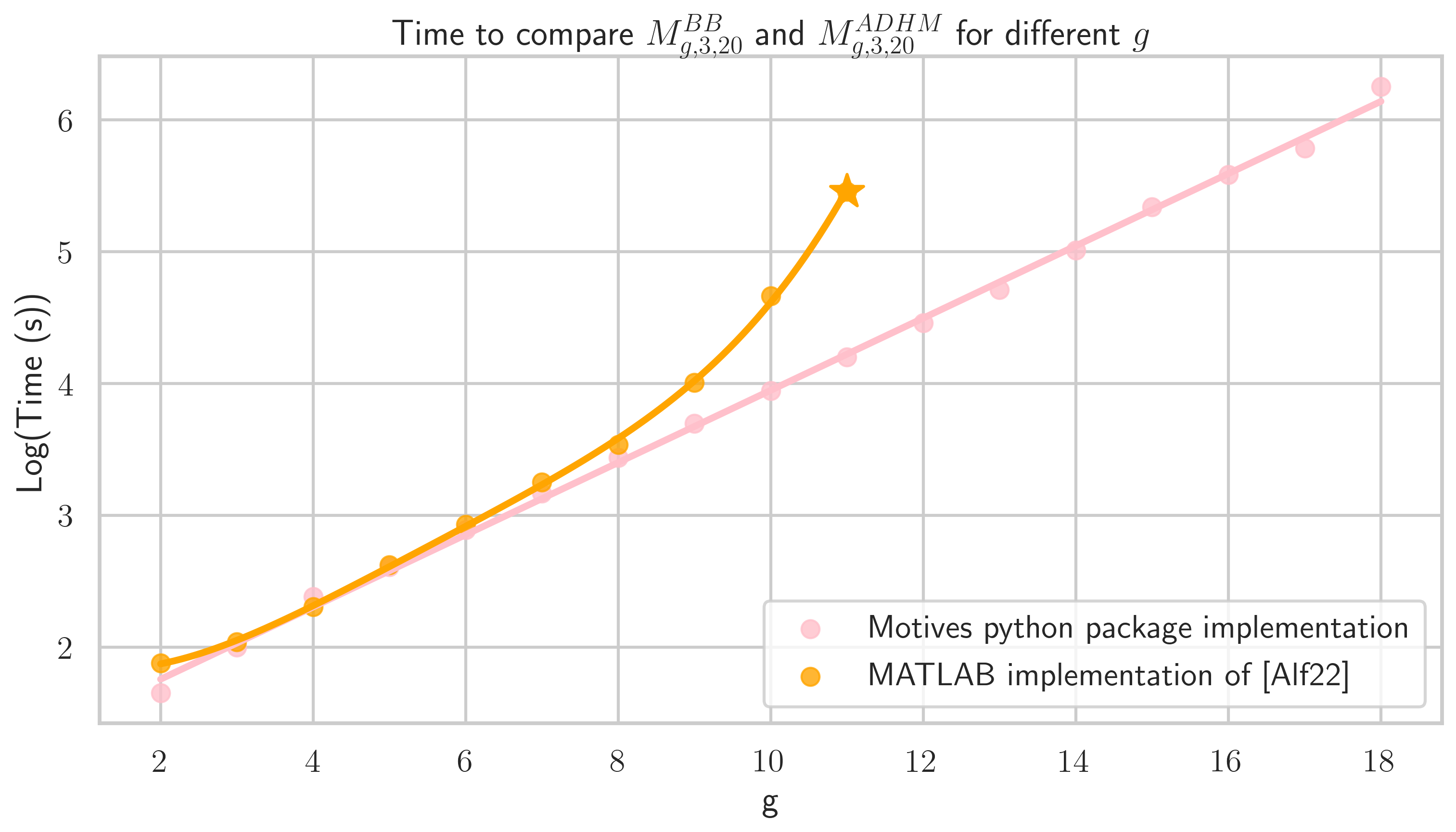}
    \caption{Time comparison between algorithm \cite{Alf22} and \code{motives} package for the computational verification of Mozgovoy's conjectural formula. \cite{Alf22} data is limited to $g\le 11$ because the program reached the memory limit for the machine for that $g$.}
    \label{fig:timeComparison}
\end{figure}

It is worth remarking that the new (general purpose) library took less time to compute the biggest $g=15$ case (whose simplified polynomial has 122808 terms) than it took for the code in \cite{Alf22} to compute the $g=11$ case (with 45318 terms, almost three times less), and it computed the $g=11$ cases an order of magnitude faster than its predecessor ad-hoc implementation for the problem.

Another great benefit of the library is its simplicity of use. For example, the code needed to simplify and check the equality of $M_{g,r,p}^{BB}$ and $M_{g,r,p}^{ADHM}$ is as follows.

\noindent
\begin{minipage}{\dimexpr\linewidth-3\fboxsep-3\fboxrule}
\begin{lstlisting}
cur = Curve("x", g=g)

# Compute the motive of rank r using ADHM derivation
adhm = TwistedHiggsModuli(x=cur, p=p, r=r, method="ADHM")
eq_adhm = adhm.compute(verbose=verbose)

# Compute the motive of rank r using BB derivation
bb = TwistedHiggsModuli(x=cur, p=p, r=r, method="BB")
eq_bb = bb.compute(verbose=verbose)

# Compare the two polynomials
if eq_adhm - eq_bb == 0:
    print("Polynomials are equal")
\end{lstlisting}
\end{minipage}

\section{Future Work}
\label{section:futureWork}
We plan to continue the development of the library \code{motives} expanding its functionalities in different directions and exploring new applications of its symbolic manipulation capabilities to other problems related to motives of moduli spaces.

Some features to be implemented in future releases of \code{motives} include the following.
\begin{itemize}
    \item Tools for computing E-polynomials, Poincaré polynomials and other invariants (like the dimension) from a motive.
    \item Further manipulation tactics capable of partial simplification or manipulation of expression trees in $\lambda$-rings, which complement the full simplification algorithm described in this paper.
    \item Implementations for equations of new types of motives of commonly used geometric constructions, like character or representation varieties, general formulas for moduli spaces of chain bundles and other additional moduli spaces and moduli stacks of bundles and decorated bundles on curves.
    \item Implementations for other commonly used $\lambda$-rings.
\end{itemize}

\bibliographystyle{alpha}
\bibliography{ref}
\end{document}